\numberwithin{equation}{section}
\newcommand*\bigcdot{\mathpalette\bigcdot@{.7}}
\newcommand*\bigcdot@[2]{\mathbin{\vcenter{\hbox{\scalebox{#2}{$\m@th#1\bullet$}}}}}
\definecolor{light-gray}{gray}{0.95}
\renewcommand{\epsilon}{\varepsilon}
\newcommand{\mf}[1]{{\mathfrak #1}}
\newcommand{\mb}[1]{{\mathbf #1}}
\newcommand{\bb}[1]{{\mathbb #1}}
\newtheorem{lemma}{Lemma}
\newtheorem{remark}{Remark}
\numberwithin{theorem}{section} 
\numberwithin{lemma}{section} 
\renewcommand{\leq}{\leqslant}
\renewcommand{\geq}{\geqslant}
\renewcommand{\le}{\leqslant}
\renewcommand{\ge}{\geqslant}
\journal{arXiv\ Mathematics}
\begin{document}
	\begin{frontmatter}
	\title{Generalized integral transform method for solving multilayer diffusion problems}
	\author{Mohamed Akel$^{a,b,*}$}
	\ead{makel@sci.svu.edu.eg , makel65@yahoo.com}
	\author{Hillal M. Elshehabey$^{a,b}$}
	\ead{hilal.hilal@sci.svu.edu.eg}
	\author{Ragaa Ahmed$^{a,b}$}
	\ead{ragaa\_ahmed@sci.svu.edu.eg}
	\cortext[cor1]{Corresponding author.}
	
	\address{ $^a$Mathematics Department, Faculty of Science, South Valley University, Qena 83523, Egypt\\
		$^b$Academy of Scientific Research and Technology (ASRT), 101 Kasr Al-Ainy St., Cairo 11516, Egypt}
	
		\begin{abstract}
Multilayer diﬀusion problems have found significant important that they arise in many medical, environmental and industrial applications of heat and mass transfer. In this article, we study the solvability of one-dimensional nonhomogeneous multilayer diffusion problem. We use a new generalized integral transform, namely, $ \bb M_{\rho,m} -$transform 
[Srivastava et al., https://doi.org/10.1016/S0252-9602(15)30061-8]. 
First, we reduce the nonhomogeneous multilayer diffusion problem into a sequence of one-layer diffusion problems including time-varying given functions, followed by solving a general nonhomogeneous one-layer diffusion problem via the $ \bb M_{\rho,m} -$transform. 
Hence, by means of general interface conditions, a renewal equations' system is determined. Finally, the $ \bb M_{\rho,m} -$transform and its analytic inverse are used to obtain an explicit solution to the renewal equations' system. Our results generalize those ones in 
[Rodrigo and Worthy, http://dx.doi.org/10.1016/ j.jmaa.2016.06.042].
		\end{abstract}
		
		\begin{keyword}
			\texttt{Multilayer, Diffusion equation, Integral transform, Boundary value problem.}
		\end{keyword}
	\end{frontmatter}
%
{\bf Math. Classifications:} 35K05, 44A05, 44A35, 58J35, 76R50, 76S05	
\section{Introduction} 
The multilayer diffusion problems are typical models for variety of solute transport phenomena in layered permeable media, such as advection, dispersion and reaction diffusions (\cite{goltz2017analytical,hou2021boundary,leij1991mathematical,liu1998analytical,liu2019solvability,van1982analytical}). These problems have had their importance due to their natural prevalence in a remarkable large number of applications such as chamber-based gas fluxes measurements \cite{liu2009multi}, contamination and decontamination in permeable media  \cite{liu1998analytical,liu2008analytical}, drug eluting stent \cite{mcginty2011modelling,pontrelli2007mass},  drug absorption \cite{addicks1989mathematical,simon2005analytical}, moisture propagation in woven fabric composites \cite{pasupuleti2011modelling},  permeability of the skin \cite{mitragotri2011mathematical}, and  wool-washing\cite{caunce2008spatially}.
 
As epidemiological models, reaction-diffusion problems are widely used to model and analyze the spread of diseases such as the global COVID-19 pandemic caused by resulted from SARS-CoV2.  These models describe the spatiotemporal prevalence of the viral pandemic, and apprehend the dynamics depend on human habits and geographical features. The models estimate a qualitative harmony between the simulated prediction  of the local spatiotemporal spread of a pandemic and the epidemiological collected datum. See \cite{raimundez2021covid,viguerie2021simulating}. These data-driven emulations can essentially inform the respective authorities to purpose efficient pandemic-arresting measures and foresee the geographical distribution of vital medical resources. Moreover, such studies explore alternate scenarios for the repose of lock-down restrictions based on the local inhabitance  densities and the qualitative dynamics of the infection. For more applications one can refer e. g., to \cite{du2018partial,hickson2009critical}.

Although the numerical methods are usually applied to solve the diffusion problems, especially in the heterogeneous permeable media, the analytic solutions when available, are characterized by their exactness and continuity in space and time. In this work, we focus on analytic solutions of certain nonhomgeneous diffusion problems in multilayer permeable media. Here, the retardation factors are assumed to be constant, the dispersion coefficients vary across layers, but being constants within each layer, and the free terms are (arbitrary) time-varying functions.

Analytic and semi analytic solutions of multilayer diffusion problems are developed by using the Laplace integral transform \cite{carr2020new}, \cite{carr2016semi}, \cite{carr2018semi}, \cite{carr2018modelling},  \cite{de2002analytic},\cite{guerrero2013analytical}, \cite{hickson2009critical}, \cite{liu1998analytical}, 
\cite{rodrigo2016solution}, \cite{sun2004transient},
\cite{zimmerman2016analytical}. Applying Laplace transforms, to solve multilayer diffusion problems, has advantages as an applicable tool in  handling different types of boundary conditions and averts solving complicated transcendental equations as in demand by eigenfunction expansion methods. Further works involving the Laplace transform have studied permeable layered reaction diffusion problem in \cite{chen2016assessing}, \cite{park2009one}. Solutions obtained in these works are restricted to two layers as well as obtaining the inverse Laplace transform numerically. 

In the current work, we aim to extend, generalize and merge results in \cite{carr2016semi}, \cite{carr2018semi}, \cite{park2009one}, \cite{rodrigo2016solution} and \cite{zimmerman2016analytical} to solve certain nonhomgeneous diffusion problems in one-dimensional n-lyared media. We use a new generalized integral transform recently introduced in \cite{sri015}. The obtained solutions are applicable to more general linear nonhomogeneous diffusion equations, finite media consisting of arbitrary many layers, continuity and dispersive flow at the contact interfaces between sequal layers and  transitory boundary conditions of arbitrary type at the inlet and outlet. To the best knowledge of the authors, analytical solutions verifying all the above mentioned conditions have not previously reported in literature which strongly motivates this current work.

In the remaining part of this introductory section, in Subsection 1.1 the multilayer diﬀusion problem is described and then it is reformulated as a sequence of one-layer diﬀusion problems having boundary conditions including given time-depending functions. Basic properties for $\bb M_{\rho,m}-$transform that will be needed in this work are stated in Subsection 1.2. The remaining sections are constructed as follows: Section 2 is devoted to , we discuss the solvability of a general linear nonhomogeneous one-layer diffusion problem with arbitrary time-varying data, using the $\bb M_{\rho,m}-$transform. Section 3 is devoted to our main multilayer diffusion problem, where in Subsection 3.1 we solve a two-layer problem to shed light on the basic idea by considering this simple case. Further, in Subsection 3.2, we return to benfit from the results obtained in Section 2 and Subsection 3.1 to solve the main multilayer diffusion problem \eqref{gPDE}-\eqref{Inner Bc2}, see Subsection \ref{MM1.1} below. 
%
%
\subsection{\textbf{Mathematical modeling for nonhomogeneous n-layer diffusion systems}} \label{MM1.1} 
A one-dimensional diffusion problem in an n-layered permeable medium is set out as follows. Let 
\[\alpha=x_{0}<x_{1}<\cdots<x_{n-1}<x_{n}=\beta \]
be a finite partition of the interval $[\alpha,\beta]$. In each subinterval $[x_{j-1},x_{j}]$, with $j=1,2,\cdots,n$, the component function $\varphi_{j}(x,t)$ satisfies the partial differential equation (PDE)
\begin{equation}\label{gPDE}
\frac{\partial \varphi_{j}}{\partial t}=d_{j}\frac{\partial^{2} \varphi_{j}}{\partial x^{2}}+ \lambda(t,\uptau)r_{j}(x,t),\   x\in(x_{j-1},x_{j}),\ \ \ t,\uptau>0,
\end{equation}
where $d_{j}\geq 0$, for all $1\leq j\leq n$, are the diffusion coefficients and 
\begin{equation}\label{2.5}
\lambda(t,\uptau)=\bigg(\frac{t^{m}}{\uptau^{m}}+\uptau^{m}\bigg)^{-\rho},\quad t \geq 0,\; \uptau> 0,
\end{equation}
with $ m\in  \bb Z_{+} =\{ 1,2,3,\cdots\},\ \rho\in  \bb C, \mbox{Re}(\rho) >0$. Here, the function-term $\lambda(t,\uptau)r_{j}(x,t)$ physically means the external source term that could be applied to the diffusion equation with $r_{j}(x,t)$  depends on time and space while the other factor of the source term i.e., $p$ depends only on time. This last term could be for instance, a periodic-time magnetic source.\\ 
The initial conditions (ICs) are assumed as
\begin{equation}\label{ICs}
\varphi_{j}(x,0)=\eta_{j}(x), \quad x\in [x_{j-1},x_{j}],\, 1\leq j\leq n.
\end{equation}	
The boundary conditions (BCs) are posited as
\begin{itemize}
	\item The outer BCs (at the inlet $x=\alpha$ and the outlet $x=\beta$) are general Robin boundary conditions as
\begin{align}
& \imath \varphi_{1}(\alpha,t)+\iota \frac{\partial \varphi_{1}}{\partial x}(\alpha,t)=\lambda(t,\uptau)\zeta(t),\label{inlet BCs}\\
& \ell \varphi_{n}(\beta,t)+\l \frac{\partial \varphi_{n}}{\partial x}(\beta,t)=\lambda(t,\uptau)\xi(t),\label{outlet BCs}
\end{align}
for all $t \ge 0$, with $\imath, \iota, \ell$ and $\l$ are constants satisfying, $ |\imath|+|\iota|>0,\ |\ell|+|\l|>0 $.
\item The inner BCs (the interface conditions) are
\begin{align}
  \varphi_{j}(x_{j},t)&=\Lambda_{j} \varphi_{j+1}(x_{j},t),
  \label{Inner Bc1}\\
  \nu_j \varphi_{j}(x_{j},t) +\mu _j \frac{\partial \varphi_{j}}{\partial x}(x_{j},t)&=\nu_{j+1} \varphi_{j+1}(x_{j},t) +\mu_{j+1} \frac{\partial \varphi_{j+1}}{\partial x}(x_{j},t)\label{Inner Bc2}
\end{align}
for all $t \ge 0$, with $ |\nu_j |+|\mu_j |>0$ for all $ j=1,2,\cdots,n-1.$
\end{itemize}
For appropriate given functions $\eta_{1},\cdots,\eta_{n},\zeta$ and $\xi$, we are going to find an analytic solution of the problem \eqref{gPDE}-\eqref{Inner Bc2} using the $\bb M_{\rho,m}-$generalized integral transform, introduced recently in \cite{sri015}. Problem \eqref{gPDE}-\eqref{Inner Bc2} can be reduced into the following sequence of one-layer diffusion problems.
\begin{itemize}
	\item In the inlet layer i.e., $x \in [x_{0},x_{1}]$
	\begin{equation}\label{BPV in 1 layer}
	\begin{split}
	& \frac{\partial \varphi_{1}}{\partial t}=d_{1} \frac{\partial^{2} \varphi_{1}}{\partial x^{2}} + \lambda(t,\uptau)r_{1}(x,t),\ \ x\in(x_{0},x_{1}),\ \,t,\uptau >0,\\
	& \varphi_{1}(x,0)=\eta_{1}(x),\quad x\in [x_{0},x_{1}],\\
	& \imath \varphi_{1}(x_{0},t) +\iota \frac{\partial \varphi_{1}}{\partial x}(x_{0},t)=\lambda(t,\uptau)\zeta_{1}(t),\quad t \ge 0,\ \uptau>0,\\
	&\nu_{1} \varphi_{1}(x_{1},t) +\mu_{1} \frac{\partial \varphi_{1}}{\partial x}(x_{1},t)=\lambda(t,\uptau)\xi_{1}(t),\quad t \ge 0,\ \uptau>0.
	\end{split}    
    \end{equation}
    \item In the interior layers i.e.,  $x\in [x_{j-1},x_{j}]$, $2\le j\le n-1$
    \begin{equation}\label{BPV in j layer}
    \begin{split}
    & \frac{\partial \varphi_{j}}{\partial t}=d_{j} \frac{\partial^{2} \varphi_{j}}{\partial x^{2}} + \lambda(t,\uptau)r_{j}(x,t),\quad x\in(x_{j-1},x_{j}),\ \ t,\uptau>0,\\
    & \varphi_{j}(x,0)=\eta_{j}(x),\quad x\in [x_{j-1},x_{j}],\\
    & \nu_{j} \varphi_{j}(x_{j-1},t) +\mu_{j} \frac{\partial \varphi_{j}}{\partial x}(x_{j-1},t)=\lambda(t,\uptau)\zeta_{j}(t),\quad t \ge 0,\ \uptau>0,\\
    &\nu_{j} \varphi_{j}(x_{j},t) +\mu_{j} \frac{\partial \varphi_{j}}{\partial x}(x_{j},t)=\lambda(t,\uptau)\xi_{j}(t),\quad t \ge 0,\ \uptau>0.
    \end{split}    
    \end{equation}
    \item In the outlet layer i.e.,  $x\in [x_{n-1},x_{n}]$
    \begin{equation}\label{BPV in n layer}
    \begin{split}
    & \frac{\partial \varphi_{n}}{\partial t}=d_{n} \frac{\partial^{2} \varphi_{n}}{\partial x^{2}} + \lambda(t,\uptau)r_{n}(x,t),\quad x\in(x_{n-1},x_{n}),\ \ t,\uptau>0\\
    & \varphi_{n}(x,0)=\eta_{n}(x),\quad x\in [x_{n-1},x_{n}],\\
    & \nu_{n} \varphi_{n}(x_{n-1},t) +\mu_{n} \frac{\partial \varphi_{n}}{\partial x}(x_{n-1},t)=\lambda(t,\uptau)\zeta_{n}(t),\quad t \ge 0,\ \uptau>0,\\
    &\ell \varphi_{n}(x_{n},t) +\l \frac{\partial \varphi_{n}}{\partial x}(x_{n},t)=\lambda(t,\uptau)\xi_{n}(t),\quad t \ge 0,\ \uptau>0.
    \end{split}    
    \end{equation}
\end{itemize}
\begin{remark}\label{rem1}
Each of the initial boundary value problems \eqref{BPV in 1 layer}-\eqref{BPV in n layer} is a case of one-layer nonhomogeneous diffusion problem that will be discussed in Section 2 below.
\end{remark}
Now, in view of the inner boundary conditions \eqref{Inner Bc1}-\eqref{Inner Bc2}, the time-varying functions $\zeta_{j}$ and $\xi_{j}$ for all $2\leq j\leq n$ are subject to
\begin{equation}\label{g_j=h_j-1}
\zeta_{j}(t)=\xi_{j-1}(t),\quad 2\le j\le n,
\end{equation}
so that
\begin{equation}\label{zeta_j}
	\zeta_{j}(t)=\begin{cases}
		\begin{split}
	\zeta(t),\quad\quad\quad \quad& \quad j=1,	\\
\xi_{j-1}(t),\quad\quad\quad \quad& \quad 2\le j\le n,
\end{split}
	\end{cases}
\end{equation}
and
\begin{equation}\label{xi_{j}}
	\xi_{j}(t)=\begin{cases}
		\begin{split}
		& \left(\nu_j \varphi_{j}(x_{j},t) +\mu_{j} \frac{\partial \varphi_{j}}{\partial x}(x_{j},t)\right)\lambda^{-1}(t,\uptau), & \quad &1\le j\le n-1,	\\
		& \xi(t), & \quad &j=n.
		\end{split}
	\end{cases}
\end{equation}

While, the outer boundary data $\zeta_{1}(t)=\zeta(t)$ and $\xi_{n}(t)=\xi(t)$ are given in \eqref{inlet BCs} and \eqref{outlet BCs}, respectively, the functions $\zeta_{j}\,(2\le j\le n)$ can be determined once we specify the functions $\xi_{j}\, (1\le j\le n-1)$. Hence, we have to find $\xi_{j},\ 1 \le j \le n-1$. To do so, we should use the first matching condition \eqref{Inner Bc1}.
%

\subsection{\textbf{Srivastava-Luo-Raina generalized integral transform}}
\label{subsec1.2}
In \cite{sri015}, Srivastava et al. introduced the following generalized integral transform
\begin{align}\label{1.1}
	\bb{M}_{\rho,m}[\varphi(t)](s,\uptau )=\int_{0}^{\infty}\frac{e^{-st} \varphi(\uptau t)}{(t^{m}+\uptau^{m})^{\rho}}dt,
\end{align}
for a continuous (or piecewise continuous) function $\varphi$ on $[0,\infty)$, where $\rho\in\bb{C}; \operatorname{Re}(\rho)\geq 0; m\in\bb{Z}_{+}$, $s>0$ is the transform variable and $\uptau>0$ is a parameter. The basic properties of the $ \bb M_{\rho,m} -$transform are given in \cite{sri015}. Next we recall some of these properties, which are needed in the present work. Indeed, as introduced in \cite{sri015} the $ \bb M_{\rho,m} -$transform is closely related with the well-known integral transforms, the Laplace, natural and Sumudu transforms. The Laplace transform is defined by
\begin{align}\label{1.2}
	{\bb L} [\varphi(t)](s)=\int_{0}^{\infty}e^{-st}\varphi(t) dt,\; \operatorname{Re}(s)>0.
\end{align}
So, from (\ref{1.1}) and (\ref{1.2}) we have the following duality relations
\[\bb{L}[\varphi(t)](s)=\bb{M}_{0,m}[\varphi(t)](s,1),\; \operatorname{Re}(s)>0,\]
\[\bb{M}_{\rho,m}[\varphi(t)](s,\uptau)=\bb{L}\left[\frac{\varphi(\uptau t)}{(t^{m}+\uptau^{m})^{\rho}}\right](s),\; s,\uptau>0,\]
\[\bb{M}_{\rho,m}[\varphi(t)](s,\uptau)=\frac{1}{\uptau}\bb{L}\left[\frac{\varphi(t)}{\left( \frac{t^{m}}{\uptau^{m}}+\uptau^{m}\right)^{\rho}}\right](\frac s \uptau),\; s,\uptau>0,\]
and,
\[\bb{M}_{\rho,m}\left[\left( \frac{t^{m}}{\uptau^{m}}+\uptau^{m}\right)^{\rho}\varphi(t)\right](s,\uptau)= \bb{L}\left[\varphi(\uptau t)\right](s),\; s,\uptau>0.\]
Setting $\rho=0$ in (\ref{1.1}), we recover the natural transform defined as (see \cite{BS}, \cite{SB})
\begin{align}\label{1.3}
	\bb{N}[\varphi(t)](s,\uptau)=\int_{0}^{\infty} e^{-s t} \varphi(\uptau t) dt,\; s>0, \uptau>0. 
\end{align}
Thus, we have the following $\bb{M}_{\rho,m}-\bb{N}-$transforms duality
$$
\bb{N}[\varphi(t)](s,\uptau)=\bb{M}_{0,m}[\varphi(t)](s,\uptau),
$$
\begin{align}\label{1.4}
	\bb{M}_{\rho,m}[\varphi(t)](s,\uptau)=\bb{N}\left[\frac{\varphi(t)}{\left(\frac{t^{m}}{\uptau^{m}}+\uptau^{m}\right)^{\rho}}\right](s,\uptau),\; s>0, \uptau>0,
\end{align}
and,
\begin{align}\label{1.5}
	\bb{M}_{\rho,m}\left[\left(\frac{t^{m}}{\uptau^{m}}+\uptau^{m}\right)^{\rho}\varphi(t)\right](s,\uptau)=
	\bb{N}\left[\varphi(t)\right](s,\uptau),\; s>0, \uptau>0.
\end{align}
The Sumudu transform is defined by (\cite{BK},\cite{BKK} and \cite{KB})
\begin{align*}
	\bb{S}[\varphi(t)](\uptau)=\int_{0}^{\infty}e^{-t} \varphi(\uptau t) dt,\; \uptau>0
\end{align*}
Thus,
\[\bb{S}[\varphi(t)](\uptau)=\bb{M}_{0,m}[\varphi(t)](0,\uptau),\;\uptau>0,\]
and, 
\begin{align*}
	\bb{M}_{\rho, m}[\varphi(t)](s, \uptau)=
	\frac{1}{s} \bb{S}\left[\frac{\varphi(t)}{\left(\frac{t^{m}}{\uptau^{m}}+\uptau^{m}\right)^{\rho}} \right] \left(\frac{\uptau}{s}\right),\; s,\uptau>0.
\end{align*}

%
Based on these dualities of the $\bb M_{\rho,m}-$transform (\ref{1.1}) and these well-known integral transforms it seems to be interesting to apply the $\bb M_{\rho,m}-$transform (\ref{1.1}) in solving a variety of boundary and initial-boundary problems. In this context, we recall the following results \cite{sri015}:
\begin{itemize}
	\item 	Let $\varphi^{(n)}(t)$ be the $n^{th}-$order $t-$derivative of the function $\varphi(t)$ and $|\varphi(t)|\leq K e^{t/\gamma}$ with $K>0, \gamma>0$. Then,
	\begin{align}\label{1.7}
		\bb{M}_{\rho, m}\left[\left(\frac{t^{m}}{\uptau^{m}}+\uptau^{m}\right)^{\rho} \varphi^{(n)}(t)\right](s,\uptau)
		=\frac{s^{n}}{\uptau^{n}}\bb{N}[\varphi(t)](s,\uptau)- \sum_{k=0}^{n-1}\frac{s^{k}}{\uptau^{k+1}} \varphi^{(n-k-1)}(0)
	\end{align}
	where $\bb{N}[\varphi(t)](s,\uptau)$ is defined by (\ref{1.3}). 
Using the duality \eqref{1.5} in \eqref{1.7}, we find
\begin{align}\label{N for derivatives}
	\bb{N} \left[ \varphi^{(n)}(t)\right](s,\uptau)
	=\frac{s^{n}}{\uptau^{n}}\bb{N}[\varphi(t)](s,\uptau)- \sum_{k=0}^{n-1}\frac{s^{k}}{\uptau^{k+1}} \varphi^{(n-k-1)}(0), \quad n=0,1,\cdots. 
\end{align}
\item Again, using the dualities stated before a convolution formula for the $\bb M_{\rho,m} -$transform \eqref{1.1} can be obtained as follows. Here, the convolution for the Laplace transform will be considered, that is, for the functions $\varphi$ and $\psi$, the convolution formula is given as
\[(\varphi*\psi)(t)=\int_{0}^{t}\varphi(x)\psi(t-x)dx=\int_{0}^{t}\varphi(t-x)\psi(x)dx.\]
If 
$\Phi(s,\uptau)=\bb M_{\rho,m}[\varphi(t)](s,\uptau)$ and $\Psi(s,\uptau)=\bb M_{\rho,m}[\psi(t)](s,\uptau)$, then
\begin{align*}
	\uptau \Phi(s,\uptau) \Psi(s,\uptau)&=\uptau \int_{0}^{\infty}\frac{e^{-st_{1}} \varphi(\uptau t_{1})}{\left(t_{1}^{m}+\uptau^{m}\right)^{\rho}}dt_{1}
	\int_{0}^{\infty}\frac{e^{-st_{2}} \psi(\uptau t_{2})}{\left(t_{2}^{m}+\uptau^{m}\right)^{\rho}}dt_{2}\\
	&=\uptau \int_{0}^{\infty} \int_{0}^{\infty} e^{-s(t_{1}+t_{2})}\tilde{\varphi}(\uptau t_{1})\tilde{\psi}(\uptau t_{2})dt_{1} dt_{2},
\end{align*}
where
\[\tilde{\varphi}(t)=\frac{\varphi(t)}{\left(\frac{t^{m}}{\uptau^{m}}+\uptau^{m}\right)^{\rho}},\ \tilde{\psi}(t)=\frac{\psi(t)}{\left(\frac{t^{m}}{\uptau^{m}}+\uptau^{m}\right)^{\rho}}.\]
Setting $t_{1}+t_{2}=t$ in the last equality, one gets
\begin{align*}
	\uptau \Phi(s,\uptau) \Psi(s,\uptau)&=
	\uptau \int_{0}^{\infty} \int_{t_{2}}^{\infty} e^{-st}\tilde{\varphi}(\uptau(t-t_{2})) \tilde{\psi}(\uptau t_{2})dt dt_{2}\\
	&=\uptau \int_{0}^{\infty}e^{-st} dt \int_{0}^{t} \tilde{\varphi}(\uptau(t-t_{2})) \tilde{\psi}(\uptau t_{2}) dt_{2},
\end{align*}
here, changing of the integral order is used. Thus, using the duality of the $\bb M_{\rho,m}$ and $\bb N$ transforms (see \eqref{1.4}), we find
\begin{equation}\label{M-Convolution}
	\uptau \Phi(s,\uptau) \Psi(s,\uptau)=\bb N[\tilde{\varphi}*\tilde{\psi}](s,\uptau).
\end{equation}
\begin{remark}\label{rem2}
	If we put $\rho=0$ in \eqref{M-Convolution}, the case being interesting later in our work, then we get
	\begin{equation}\label{N-Convolution}
		\uptau \bb N[\varphi(t)](s,\uptau) \bb N[\psi(t)](s,\uptau)=\bb N[(\varphi*\psi)(t)](s,\uptau).
	\end{equation}	
\end{remark}
\item Once again, using the dualities stated before an inversion formula of the $\bb \bb M_{\rho,m} -$transform \eqref{1.1} is given (see, \cite[Theorem 4.1]{sri015}) as
\begin{align}\label{M-Inversion}
	\varphi(t)&=\left(\frac{t^m}{\uptau^m}+\uptau^{m}\right)^{\rho} \bb L^{-1} \left\{\bb M_{\rho,m} [\varphi(t)](s,\uptau)\right\}\left(\frac{t}{\uptau}\right) \nonumber \\
	&=\frac{1}{2\pi i}\left(\frac{t^{m}}{\uptau^{m}}+\uptau^{m}\right)^{\rho} \int_{c-i\infty}^{c+i\infty} e^{\frac{st}{\uptau}}
	\bb M_{\rho,m}[\varphi(t)](s,\uptau) ds,\quad c,\uptau>0,
\end{align}
as long as the integral converges absolutely. In case, when $\rho=0$ one obtains the following inversion formula of the natural transform (\cite[Theorem 5.3]{BS})
\begin{align}\label{N-Inversion}
	\varphi(t)=\frac{1}{2\pi i} \int_{c-i\infty}^{c+i\infty} e^{\frac{st}{\uptau}}
	\bb N[\varphi(t)](s,\uptau) ds,\quad c,\uptau>0.
\end{align}
The residue theorem (see e.g. \cite{lang2013complex}) is usually used to calculate the contour integrals in \eqref{M-Inversion} and \eqref{N-Inversion}. 
\end{itemize}
\section{One-layer nonhomogeneous diffusion system}
 \label{sec2}
 Now, we investigate the solvability for the following one-layer nonhomogeneous initial-boundary value problem 
\begin{eqnarray}
 	&&\frac{\partial \varphi}{\partial t}=d \frac{\partial^{2} \varphi}{\partial x^{2}}+\lambda(t,\uptau)r(x ,t), \quad x \in (\alpha,\beta),\ \ t,\uptau >0,\qquad \qquad \qquad \qquad  \label{2.1} 
 	 \\
    &&\varphi(x,0)=\eta(x),\quad x \in [\alpha,\beta],\label{2.2} 
    \\
 	&&\imath \varphi(\alpha,t)+\iota \frac{\partial \varphi}{\partial x}(\alpha,t)=\lambda(t,\uptau)\zeta(t), \quad t \geq 0,\; \uptau> 0, \label{2.3} 
 	\\
 	&&\ell \varphi(\beta,t)+\l \frac{\partial \varphi}{\partial x}(\beta,t)=\lambda(t,\uptau)\xi(t), \quad t \geq 0,\; \uptau> 0, \label{2.4} 
  \end{eqnarray}
 where $ d, \imath, \iota ,\ell$ and $\l$ are constants such that $ |\imath|+|\iota|>0,\ |\ell|+|\l|>0 $, and $ p,r,\eta,\zeta $ and $ \xi $ are given functions with $p$ as in \eqref{2.5}.
 
Applying the $ \bb M_{\rho,m} -$transform defined by (\ref{1.1}) to (\ref{2.1}), yields
\begin{equation}\label{2.6}
 	\bb M_{\rho,m}\left[\frac{1}{\lambda(t,\uptau)}\varphi_{t}(x,t)\right]=d\ \bb M_{\rho,m}\left[\frac{1}{\lambda(t,\uptau)}\varphi_{xx}(x,t) \right]+\bb M_{\rho,m}[r(x,t)].
\end{equation}
Using the duality of the $ \bb M_{\rho,m} -$transform and the natural transform given by (\ref{1.5}) and \eqref{1.7}, Eq.(\ref{2.6}) can be reduced to   
\begin{equation}\label{2.7}
 	\dfrac{s}{\uptau d}\bb N[\varphi(x,t)](x;s,\uptau)-\bb N[\varphi_{xx}(x,t)](x;s,\uptau)-\dfrac{1}{\uptau d }\varphi(x,0)= \dfrac{1}{d}\bb N [\lambda(t,\uptau)r(x,t)](x;s,\uptau).
 \end{equation}
 where $\bb{N}[\varphi(t)](s,\uptau)$ is defined by (\ref{1.3}). Setting 
 \begin{equation}\label{2.8}
 	\hat{\varphi}(x;s,\uptau)=\bb N[\varphi(x,t)](x;s,\uptau),
 \end{equation}
 then, (\ref{2.7}) can be expressed as 
 \begin{equation}\label{2.9}
 	 		\hat{\varphi}_{xx}(x;s,\uptau)-\dfrac{s}{\uptau d}\hat{\varphi}(x;s,\uptau)=F(x;s,\uptau), 
\end{equation}
where
\begin{equation} \label{F}	 		
 F(x;s,\uptau)=-\dfrac{1}{d}\bb N [\lambda(t,\uptau)r(x,t)](x;s,\uptau)-\dfrac{1}{\uptau d }\eta(x).
  \end{equation}
Applying the variation of parameters method to the nonhomogeneous equation \eqref{2.9}, gives the general solution as
 \begin{equation}\label{2.10} 
 	\hat{\varphi}(x;s,\uptau )=A \cosh\sqrt{\frac{s}{\uptau  d}}x+B \sinh\sqrt{\frac{s}{\uptau  d}}x+\sqrt{\frac{\uptau  d}{s}}\int_{\alpha}^{x}F(y;s,\uptau)\sinh\sqrt{\frac{s}{\uptau  d}}(x-y)dy,
 \end{equation}
 where $ A $ and $ B $ are arbitrary invariants which can depend on $ s $ and $ \uptau $.\\
 Differentiating \eqref{2.10} with respect to $ x $, gives 
 \begin{equation}\label{2.11} 
\begin{split}
  		\hat{\varphi}_{x}(x;s,\uptau)=&A\sqrt{\frac{s}{\uptau d}} \sinh\sqrt{\frac{s}{\uptau d}}x+B\sqrt{\frac{s}{\uptau d}} \cosh\sqrt{\frac{s}{\uptau d}}x\\
  		&+ \int_{\alpha}^{x}F(y;s,\uptau)\cosh\sqrt{\frac{s}{\uptau d}}(x-y)dy.
\end{split}
 \end{equation}
Transforming the boundary conditions \eqref{2.3} and \eqref{2.4}, implies
\begin{equation}\label{2.12}  
 	\begin{split}
 		&\imath \hat{\varphi}(\alpha;s,\uptau)+\iota \hat{\varphi}_{x}(\alpha;s,\uptau)=\bb M_{\rho, m}[\zeta(t)]=\bb N [\lambda(t,\uptau)\zeta(t)]\\
 		&\ell \hat{\varphi}(\beta,,s,t)+\l \hat{\varphi}_{x}(\beta;s,t)=\bb M_{\rho, m}[\xi(t)]=\bb N [\lambda(t,\uptau)\xi(t)].
 	\end{split}
\end{equation}
 For simplicity, we set the following vector notations
 %
 \begin{equation}\label{2.13}  
 	\begin{split}
 		\mf a=(\imath,\iota),& \ \mf b=(\ell,\l),  \\
 		\mf L(y;s,\uptau)&=\bigg(\cosh\sqrt{\frac{s}{\uptau d}}y,\sqrt{\frac{s}{\uptau d}}\sinh\sqrt{\frac{s}{\uptau d}}y\bigg),\\
 		\mf C(y;s,\uptau)&=\bigg(\sinh\sqrt{\frac{s}{\uptau d}}y,\sqrt{\frac{s}{\uptau d}}\cosh\sqrt{\frac{s}{\uptau d}}y\bigg).
 	\end{split}
 \end{equation}
Obviously, we have  
 \begin{equation}\label{2.14}
 	\begin{split}
 		\frac{\partial \mf L}{\partial y}(y;s,\uptau)&=\sqrt{\frac{s}{\uptau d}} \mf C (y;s,\uptau),\\
 		\frac{\partial \mf C}{\partial y}(y;s,\uptau)&=\sqrt{\frac{s}{\uptau d}} \mf L(y;s,\uptau).
 	\end{split}
 \end{equation}
 Substituting \eqref{2.10} and \eqref{2.11} into \eqref{2.12} and using the vector notation, give the algebraic linear system 
 \begin{equation}\label{2.15} 
 	\left( \begin{array}{cc}
 		\langle  \mf a ,\mf L(\alpha;s,\uptau)\rangle& \langle  \mf a ,\mf C(\alpha;s,\uptau)\rangle \\ \langle  \mf b ,\mf L(\beta;s,\uptau)\rangle &\langle  \mf b ,\mf C(\beta;s,\uptau)\rangle \end{array}\right) \;\left( \begin{array}{cc}
 		A \\ B\end{array}\right) \; =\left( \begin{array}{cc}
 		\bb G(s,\uptau)\\ H^{+}(x;s,\uptau)\end{array}\right),
 \end{equation}
 where $ \langle \cdot \rangle $ is the usual dot product in $ \bb R^{2} $, and
 \begin{equation}\label{2.16}
 	\begin{split}
 		\bb G(s,\uptau)&=\bb N[\lambda(t,\uptau)\zeta(t)]\\ 
 		H^{+}(x;s,\uptau)&=H(s,\uptau)-\sqrt{\frac{\uptau  d}{s}}\int_{\alpha}^{\beta}F(y;s,\uptau)\langle \mf b,\mf C(\beta-y;s,\uptau)\rangle dy
 	\end{split}
 \end{equation}
 with, $ H(s,\uptau)=\bb N[\lambda(t,\uptau)\xi(t)] $ and $\zeta(t)$ and $\xi(t)$ are the bounday data given in \eqref{2.3} and \eqref{2.4}, respectively.
 The solution ($A,B$) of system \eqref{2.15} is
 \begin{eqnarray} 
 		&& A \Delta(s)=\bb G(s,\uptau)\langle  \mf b ,\mf C(\beta;s,\uptau)\rangle - H^{+}(x;s,\uptau)\langle  \mf a ,\mf C(\alpha;s,\uptau)\rangle ,\\ \label{2.17}
 		&& B \Delta(s)=-\left[\bb G(s,\uptau)\langle  \mf b ,\mf L(\beta;s,\uptau)\rangle - H^{+}(x;s,\uptau)\langle  \mf a ,\mf L(\alpha;s,\uptau)\rangle \right],\label{2.18}
 \end{eqnarray}
 where
 \begin{equation}\label{2.19} 
 	\Delta(s) =\langle  \mf b ,\mf C(\beta;s,\uptau)\rangle \langle  \mf a ,\mf L(\alpha;s,\uptau)\rangle-\langle  \mf a ,\mf C(\alpha;s,\uptau)\rangle\langle  \mf b ,\mf L(\beta;s,\uptau)\rangle ,
 \end{equation}
is the determinant of the coefficient matrix of system \eqref{2.15}.
Substituting the constants $ A $ and $ B $ into \eqref{2.10} gives 
 \begin{equation*}
 	\begin{split}
 		\hat{\varphi}(x;s,\uptau)=&\frac{\cosh\sqrt{\frac{s}{\uptau d}}x}{\Delta(s)}\bigg(\bb G(s,\uptau)\langle  \mf b ,\mf C(\beta;s,\uptau)\rangle-H^{+}(x;s,\uptau)\langle  \mf a ,\mf C(\alpha;s,\uptau)\rangle\bigg)\\
 		&+\frac{\sinh\sqrt{\frac{s}{\uptau d}}x}{\Delta(s)} \bigg(-\bb G(s,\uptau)\langle  \mf b ,\mf L(\beta;s,\uptau)\rangle+H^{+}(x;s,\uptau)\langle  \mf a ,\mf L(\alpha;s,\uptau)\rangle\bigg)\\
 		&+\sqrt{\frac{\uptau  d}{s}}\int_{\alpha}^{x}\sinh\sqrt{\frac{s}{\uptau d}}(x-y)F(y;s,\uptau)dy,
 	\end{split}
 \end{equation*}
which can be rewritten as  
 \begin{equation}\label{2.20} 
 	\begin{split}
 		\hat{\varphi}(x;s,\uptau)=&\frac{\bb G(s,\uptau)\psi(x,\beta;s,\uptau,\mf b)}{\Delta(s)}-\frac{H(s,\uptau)\psi(x,\alpha;s,\uptau,\mf a)}{\Delta(s)}+\hat{\theta}(x;s,\uptau)
 	\end{split}
 \end{equation}
 where 
 \begin{equation}\label{2.21} 
 	\psi(x,y;s,\uptau,\textbf{L})=\langle  \textbf{L} ,\mf C(y;s,\uptau)\rangle\cosh\sqrt{\frac{s}{\uptau d}}x-\langle  \textbf{L} ,\mf L(y;s,\uptau)\rangle\sinh\sqrt{\frac{s}{\uptau d}}x,
 \end{equation}
 and
 \begin{equation*}
 	\begin{split}
 		\hat{\theta}(x;s,\uptau)=&\frac{\psi(x,\alpha;s,\uptau,\mf a)}{\Delta(s)}\sqrt{\frac{\uptau  d}{s}}\int_{\alpha}^{\beta}\langle \mf b,\mf C(\beta-y;s,\uptau)\rangle F(y;s,\uptau)dy\\
 		&+\sqrt{\frac{\uptau  d}{s}}\int_{\alpha}^{x}\sinh\sqrt{\frac{s}{\uptau d}}(x-y)F(y;s,\uptau)dy.
 	\end{split}
 \end{equation*}
 For further computation we rewrite $\hat{\theta}(x;s,\uptau)$ as
\begin{align}
\hat{\theta}(x;s,\uptau)=&\sqrt{\frac{\uptau d}{s}}\bigg(\int_{\alpha}^{x}\frac{\psi(x,\alpha;s,\uptau,\mf a)\langle \mf b,\mf C(\beta-y;s,\uptau)\rangle+\Delta(s)\sinh\sqrt{\frac{s}{\uptau d}}(x-y)}{\Delta(s)} F(y;s,\uptau)dy\bigg) \nonumber\\
&+\sqrt{\frac{\uptau d}{s}}\int_{x}^{\beta} \frac{\psi(x,\alpha;s,\uptau,\mf a) \langle \mf b,\mf C(\beta-y;s,\uptau)\rangle}{\Delta(s)}  F(y;s,\uptau)dy.\label{2.22}
\end{align}
%
%
\begin{lemma}\label{lem2.1}
 	Let $ \uptau, s, x,y \in \bb R$ and $ \textbf{L} \in \bb R^{2}$. Then 
\begin{equation}\label{2.23}
 		\psi(x,y;s,\uptau,\textbf{L})=\langle  \textbf{L} ,\mf C(y-x;s,\uptau)\rangle,
\end{equation}
and
\begin{equation}\label{2.24}
\begin{split}
\Delta(s)\sinh\sqrt{\frac{s}{\uptau d}}(x-y)=&-\psi(x,\alpha;s,\uptau,\mf a)\langle \mf b ,\mf C(\beta-y;s,\uptau) \rangle\\
&+\langle \mf a, \mf C(\alpha-y;s,\uptau) \rangle \psi(x,\beta;s,\uptau,\mf b),
\end{split}
\end{equation}
consequently, for each zero $s$ of the function $\Delta(s)\sinh\sqrt{\frac{s}{\uptau d}}(x-y)$, one has
\begin{equation}\label{2.25}
\psi(x,\alpha;s,\uptau,\mf a)\langle \mf b ,\mf C(\beta-y;s,\uptau) \rangle
=\langle \mf a, \mf C(\alpha-y;s,\uptau) \rangle \psi(x,\beta;s,\uptau,\mf b),
\end{equation}
where $ \psi $ is given by \eqref{2.21}.
\end{lemma}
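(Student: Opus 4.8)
The plan is to prove the three identities in turn, reducing each to the hyperbolic addition formulas $\sinh(A\pm B)=\sinh A\cosh B\pm\cosh A\sinh B$ and $\cosh(A\pm B)=\cosh A\cosh B\pm\sinh A\sinh B$. Throughout I abbreviate $k=\sqrt{s/(\uptau d)}$, and for a constant vector $\textbf{L}=(L_1,L_2)$ I write $P(\textbf{L},u)=\langle\textbf{L},\mf L(u;s,\uptau)\rangle=L_1\cosh(ku)+L_2 k\sinh(ku)$ and $Q(\textbf{L},u)=\langle\textbf{L},\mf C(u;s,\uptau)\rangle=L_1\sinh(ku)+L_2 k\cosh(ku)$, so that the definition \eqref{2.21} reads $\psi(x,y;s,\uptau,\textbf{L})=Q(\textbf{L},y)\cosh(kx)-P(\textbf{L},y)\sinh(kx)$.

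To prove \eqref{2.23} I would substitute the explicit components of $\mf L$ and $\mf C$ from \eqref{2.13} into this form of \eqref{2.21} and collect by $L_1$ and $L_2$. The coefficient of $L_1$ is $\sinh(ky)\cosh(kx)-\cosh(ky)\sinh(kx)=\sinh(k(y-x))$, and the coefficient of $L_2$ is $k(\cosh(ky)\cosh(kx)-\sinh(ky)\sinh(kx))=k\cosh(k(y-x))$; together these give $L_1\sinh(k(y-x))+L_2 k\cosh(k(y-x))=\langle\textbf{L},\mf C(y-x;s,\uptau)\rangle$, which is \eqref{2.23}.

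For \eqref{2.24} the first step is to use \eqref{2.23} to write the two $\psi$-factors on the right-hand side as $\psi(x,\alpha;s,\uptau,\mf a)=Q(\mf a,\alpha-x)$ and $\psi(x,\beta;s,\uptau,\mf b)=Q(\mf b,\beta-x)$. Next I would record the one-argument shift rule $Q(\textbf{L},u-w)=Q(\textbf{L},u)\cosh(kw)-P(\textbf{L},u)\sinh(kw)$, which follows at once from the hyperbolic formulas applied to the definition of $Q$, and use it to expand each of $Q(\mf a,\alpha-x)$, $Q(\mf a,\alpha-y)$, $Q(\mf b,\beta-x)$, $Q(\mf b,\beta-y)$ in terms of the four scalars $P(\mf a,\alpha)$, $Q(\mf a,\alpha)$, $P(\mf b,\beta)$, $Q(\mf b,\beta)$ and hyperbolic functions of $kx$ and $ky$. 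Substituting into the right-hand side of \eqref{2.24} and multiplying out the two products is the one genuinely mechanical step.

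The main obstacle is purely the bookkeeping of that expansion, which I would organise by the type of cross term. The terms carrying $Q(\mf a,\alpha)Q(\mf b,\beta)$ and those carrying $P(\mf a,\alpha)P(\mf b,\beta)$ cancel identically between the two products, while the remaining mixed terms recombine through $\cosh(kx)\sinh(ky)-\sinh(kx)\cosh(ky)=\sinh(k(y-x))$ into $\sinh(k(x-y))$ times the bracket $P(\mf a,\alpha)Q(\mf b,\beta)-Q(\mf a,\alpha)P(\mf b,\beta)$. Comparing with \eqref{2.19} shows this bracket is exactly $\Delta(s)$, which yields \eqref{2.24}. Finally, \eqref{2.25} needs no further computation: at any zero $s$ of $\Delta(s)\sinh(k(x-y))$ the left-hand side of \eqref{2.24} vanishes, so the two terms on its right-hand side coincide, and transposing one of them gives \eqref{2.25}.
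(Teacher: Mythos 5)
Your proof is correct, but it takes a genuinely different route from the paper. The paper proves both \eqref{2.23} and \eqref{2.24} abstractly: it observes that, for fixed $y$, $s$, $\uptau$, both sides of each identity (viewed as functions of $x$) solve the same constant-coefficient initial value problem $z''-\frac{s}{\uptau d}z=0$ with matching data at $x=0$ (see \eqref{2.26}), and then invokes the uniqueness theorem for second-order linear ODEs; it obtains \eqref{2.25} by letting $s\to s_{*}$ in \eqref{2.24} at a zero $s_{*}$ of $\Delta(s)\sinh\sqrt{\frac{s}{\uptau d}}(x-y)$. You instead verify everything by direct expansion: the addition formulas give \eqref{2.23} at once, and your shift rule $Q(\textbf{L},u-w)=Q(\textbf{L},u)\cosh(kw)-P(\textbf{L},u)\sinh(kw)$ reduces \eqref{2.24} to a four-term cancellation in which the $Q_aQ_b$ and $P_aP_b$ terms drop out and the mixed terms assemble into $\bigl(P(\mf a,\alpha)Q(\mf b,\beta)-Q(\mf a,\alpha)P(\mf b,\beta)\bigr)\sinh(k(x-y))=\Delta(s)\sinh(k(x-y))$ by \eqref{2.19} --- a computation I have checked and which is right. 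The paper's uniqueness argument buys brevity and avoids bookkeeping (at the cost of having to exhibit the correct initial data, where the paper in fact has a slip: its stated $z'(0)$ for \eqref{2.24} is missing a factor of $\Delta(s)$); your computation buys complete explicitness and, for \eqref{2.25}, a cleaner finish: since \eqref{2.24} is an identity in $s$, you may simply evaluate it at the zero, making the paper's limit $s\to s_{*}$ unnecessary.
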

 \begin{proof}
The first two conclusions of the lemma follow directly from the uniqueness theorem of initial value problem for second order ordinary differential equations having constant coefficients. \\For fixed $ \uptau,s,y  \in \bb R$ and $ L \in \bb R^{2}$, in view of \eqref{2.13} and \eqref{2.21} the functions $\langle  \textbf{L} ,\mf C(y-x;s,\uptau)\rangle$ and $\psi(x,y;s,\uptau,\textbf{L})$ are solutions to the following initial value problem
\begin{equation}\label{2.26} 
 		\begin{split}
 			\frac{d^{2}z(x)}{dx^{2}}&-\frac{s}{\uptau d}z(x)=0\\
 			z(0)=&\langle  L ,\mf C(y;s,\uptau)\rangle,\quad z'(0)=-\sqrt{\frac{s}{\uptau d}}\langle  L ,\mf L(y;s,\uptau)\rangle.
 		\end{split}
\end{equation}
%
%
Thus, with the uniqueness of the solution to Problem \eqref{2.26}, we conclude \eqref{2.23}.\\
It is easy to see that as functions in $x$ both sides of \eqref{2.24} solve the differential equation in \eqref{2.26} and satisfy the initial conditions
$$z(0)= - \Delta(s) \sinh\sqrt{\frac{s}{\uptau d}}y\ \mbox{  and } z'(0)=\sqrt{\frac{s}{\uptau d}}\cosh\sqrt{\frac{s}{\uptau d}}y.$$
Hence, by the uniqueness theorem \eqref{2.24} holds true.\\
For each $s_{*}$ being a zero of the function $\Delta(s)\sinh\sqrt{\frac{s}{\uptau d}}(x-y)$, taking the limit in both sides of \eqref{2.24} as $s\to s_{*}$, gives \eqref{2.25}. 
 \end{proof}	
Applying Lemma \ref{lem2.1}, \eqref{2.20} and \eqref{2.22} respectively can be reduced to 
 \begin{equation}\label{2.27}
 	\hat{\varphi}(x;s,\uptau)=\frac{\bb G(s,\uptau)
 		\langle \mf b  ,\mf C(\beta-x;s,\uptau)\rangle}{\Delta(s)}-\frac{H(s,\uptau)\langle \mf a  ,\mf C(\alpha-x;s,\uptau)\rangle}{\Delta(s)}+\hat{\theta}(x;s,\uptau),
 \end{equation}
 \begin{equation}\label{2.28}
 	\begin{split}
 		\hat{\theta}(x;s,\uptau)=&\sqrt{\frac{\uptau d}{s}}\int_{\alpha}^{x}\frac{\langle  \mf a ,\mf C(\alpha-y;s,\uptau)\rangle \langle  \mf b ,\mf C(\beta-x;s,\uptau)\rangle}{\Delta(s)} F(y;s,\uptau)dy\\
 		&+\sqrt{\frac{\uptau d}{s}}\int_{x}^{\beta}\frac{\langle  \mf a ,\mf C(\alpha-x;s,\uptau)\rangle \langle  \mf b ,\mf C(\beta-y;s,\uptau)\rangle}{\Delta(s)} F(y;s,\uptau)dy.
 	\end{split}
 \end{equation}
Next, in order to obtain the solution to the initial value problem \eqref{2.1}-\eqref{2.4} we apply the inversion formula \eqref{N-Inversion} to \eqref{2.27} and \eqref{2.28}.
In doing so, we suppose that there are non zero simple roots $ \{s_{k}\}_{k=1}^{\infty} $ of  $\Delta (s)$. That is,
 \begin{equation}\label{2.29}  
	\Delta(s_{k})=0, \quad \Delta^{'}(s_{k})\neq0,\quad k=1,2,3,\cdots.
\end{equation} 
 \begin{lemma}\label{lem2.2} 
 	Suppose that \eqref{2.29} holds true. For each $ x,y\in \bb R $ and $ t,\uptau>0$, we get
 	\begin{equation}\label{2.30}
 		\begin{split}
 			\bb N^{-1}\bigg\{\frac{\langle  \mf a ,\mf C(x;s,\uptau)\rangle \langle  \mf b ,\mf C(y;s,\uptau)\rangle}{\sqrt{\frac{s}{\uptau d}}\Delta(s)}\bigg\}=\Theta(x,y,t,\uptau)
 		\end{split}
 	\end{equation}
 	where,
 	\begin{equation}\label{2.31} 
 		\begin{split}
 			\Theta(x,y,t,\uptau)&=\Theta_{0}(x,y,\uptau)+\sum_{k=1}^{\infty}\frac{e^{\frac{s_{k}t}{\uptau}}\langle  \mf a ,\mf C(x;s_{k},\uptau)\rangle \langle  \mf b ,\mf C(y;s_{k},\uptau)\rangle}{\sqrt{\frac{s_{k}}{\uptau d}}\Delta^{'}(s_{k})},
 		\end{split}
 	\end{equation} 
 	with
 	\begin{equation}\label{2.32}   
 	\Theta_{0}(x,y)=
 	\begin{cases}
 	0 & \mbox{if} \quad \frac{\langle  \mf a ,\mf C(x;s,\uptau)\rangle \langle  \mf b ,\mf C(y;s,\uptau)\rangle}{\sqrt{\frac{s}{\uptau d}}\Delta(s)}=O(1),\\
 	\lim\limits_{s\rightarrow 0}\sqrt{s \uptau d}\ \frac{\langle  \mf a ,\mf C(x;s,\uptau)\rangle \langle  \mf b ,\mf C(y;s,\uptau)\rangle}{\Delta(s)}  &   \mbox{if} \quad \frac{\langle  \mf a ,\mf C(x;s,\uptau)\rangle \langle  \mf b ,\mf C(y;s,\uptau)\rangle}{\sqrt{\frac{s}{\uptau d}}\Delta(s)}=O(\frac{1}{s}).
 	\end{cases}
 	\end{equation}
\begin{proof}
Let
\begin{equation}\label{2.33}
\mf R(s,\uptau)=\frac{\langle  \mf a ,\mf C(x;s,\uptau)\rangle \langle  \mf b ,\mf C(y;s,\uptau)\rangle}{\sqrt{\frac{s}{\uptau d}}\Delta(s)}.
\end{equation}
Applying the inversion formula \eqref{N-Inversion}, we find
 		\begin{equation}\label{2.34}
 	\Theta(x,y,t)= \bb N^{-1}\bigg\{ \mf R(s,\uptau)\bigg\} =\frac{1}{2\pi i} \int_{c-i\infty}^{c+i\infty} e^{\frac{st}{\uptau}}\mf R(s,\uptau)ds,\quad c,\uptau>0.
 		\end{equation}
The last integral can be usually calculated by the residue theorem \cite{lang2013complex}. Hence,  
\begin{equation}\label{2.35}
 			\Theta(x,y,t)=\sum_{\mbox{poles}\,\, s_{k}\mbox{ of }\mf R(s,\uptau)}\mbox{Res}[e^{\frac{st}{\uptau}}\mf R(s,\uptau);s_{k}].
\end{equation}
Recalling \eqref{2.29}, each $ s_{k}\,\,(k=1,2,\cdots) $ is a simple pole of $ e^{\frac{st}{\uptau}}\mf R(s,\uptau) $. Therefore,
 \begin{equation}\label{2.36}
 		 				\mbox{Res}[e^{\frac{st}{\uptau}}\mf R(s,\uptau);s_{k}]=\frac{e^{\frac{s_{k}t}{\uptau}}\langle  \mf a ,\mf C(x;s_{k},\uptau)\rangle \langle  \mf b ,\mf C(y;s_{k},\uptau)\rangle}{\sqrt{\frac{s_{k}}{\uptau d}}\Delta^{'}(s_{k})},\quad k=1,2,\cdots.
	\end{equation}
At $ s=0 $ we have
 		\begin{align}\label{2.37}
 				\mbox{Res}[e^{\frac{st}{\uptau}}\mf R(s,\uptau);0]&=\lim\limits_{s\rightarrow 0}s\ e^{\frac{st}{\uptau}}\mf R(s,\uptau)\nonumber\\
 				&=\lim\limits_{s\rightarrow 0}\frac{s}{\Delta(s)}\frac{e^{\frac{st}{\uptau}}\langle  \mf a ,\mf C(x;s,\uptau)\rangle \langle  \mf b ,\mf C(y;s,\uptau)\rangle}{\sqrt{\frac{s}{\uptau d}}}\nonumber\\
 				&=\lim\limits_{s\rightarrow 0}\sqrt{s \uptau d}\ \frac{\langle  \mf a ,\mf C(x;s,\uptau)\rangle \langle  \mf b ,\mf C(y;s,\uptau)\rangle}{\Delta(s)}.
 		\end{align}
 We see that either 
 		\begin{equation}\label{2.38}
 			\frac{\langle  \mf a ,\mf C(x;s,\uptau)\rangle \langle  \mf b ,\mf C(y;s,\uptau)\rangle}{\sqrt{\frac{s}{\uptau d}}\Delta(s)}=O(1) \quad \mbox{or} \quad \frac{\langle  \mf a ,\mf C(x;s,\uptau)\rangle \langle  \mf b ,\mf C(y;s,\uptau)\rangle}{\sqrt{\frac{s}{\uptau d}}\Delta(s)}=O(\frac{1}{s}),
 		\end{equation}
as $ s$ tends to $ 0 $. Then, $ s=0 $ is either a removable singular point or a simple pole of $   e^{\frac{st}{\uptau}}\mf R(s,\uptau)  $.\\
Hence, substituting \eqref{2.36} and \eqref{2.37} in \eqref{2.35} gives the main conclusion of the Lemma i.e., \eqref{2.31} and \eqref{2.32}.
 	\end{proof}
 \end{lemma}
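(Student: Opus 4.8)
The plan is to invert the natural transform through the complex inversion formula \eqref{N-Inversion} and then evaluate the resulting Bromwich integral by residues. Writing $\mf R(s,\uptau)$ as in \eqref{2.33}, formula \eqref{N-Inversion} expresses $\Theta$ as the contour integral \eqref{2.34}. Before invoking the residue theorem, the first point I would settle is that $\mf R(s,\uptau)$ is genuinely single-valued (meromorphic) in $s$, so that the apparent branch cut of $\sqrt{s/(\uptau d)}$ does not contribute. Setting $\omega=\sqrt{s/(\uptau d)}$ and inspecting \eqref{2.13}, one checks that $\mf L(y;s,\uptau)$ is even in $\omega$ whereas $\mf C(y;s,\uptau)$ is odd in $\omega$. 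Consequently the numerator $\langle\mf a,\mf C(x)\rangle\langle\mf b,\mf C(y)\rangle$ is even, the denominator $\sqrt{s/(\uptau d)}\,\Delta(s)$ is a product of two odd factors and hence even, so $\mf R$ depends on $\omega$ only through $\omega^{2}=s/(\uptau d)$. This shows $\mf R$ is a bona fide meromorphic function of $s$.

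Next I would locate the poles. By the hypothesis \eqref{2.29} the zeros $\{s_{k}\}$ of $\Delta(s)$ are simple and nonzero, and since the numerator does not vanish there, each $s_{k}$ is a simple pole of $\mf R$. The only remaining candidate singularity is $s=0$: writing $\Delta(s)=\omega\,g(s)$ with $g$ even in $\omega$, while each factor $\langle\mf a,\mf C(x)\rangle$ and $\langle\mf b,\mf C(y)\rangle$ is $O(\omega)$, one obtains $\mf R=O(1)$ or $\mf R=O(1/s)$ near the origin, which is exactly the dichotomy recorded in \eqref{2.38}. Closing the Bromwich contour to the left and applying the residue theorem then yields $\Theta$ as the sum of residues \eqref{2.35}.

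The residues themselves are then routine. At each simple pole $s_{k}$ the standard quotient rule $\operatorname{Res}=e^{s_k t/\uptau}\,\langle\mf a,\mf C(x;s_k,\uptau)\rangle\langle\mf b,\mf C(y;s_k,\uptau)\rangle/(\sqrt{s_k/(\uptau d)}\,\Delta'(s_k))$ produces the general term of the series in \eqref{2.31}. At $s=0$ the residue vanishes in the removable case, and in the simple-pole case it equals $\lim_{s\to0}\sqrt{s\uptau d}\,\langle\mf a,\mf C(x)\rangle\langle\mf b,\mf C(y)\rangle/\Delta(s)$, using $s/\omega=\sqrt{s\uptau d}$ and $e^{st/\uptau}\to1$; this is precisely $\Theta_{0}$ as defined in \eqref{2.32}. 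Assembling the two contributions gives \eqref{2.30}--\eqref{2.31}.

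I expect the real difficulty to lie not in the algebra of the residues but in making the contour-closing step rigorous. One must exhibit a sequence of expanding contours that steer clear of all the $s_{k}$, verify a Jordan-type decay of $e^{st/\uptau}\mf R(s,\uptau)$ on the closing arcs (which in turn requires the asymptotic distribution of the roots $s_{k}$ together with a uniform lower bound on $|\Delta(s)|$ away from them), and establish absolute convergence of the residue series so that the termwise summation in \eqref{2.31} is legitimate. The even-in-$\omega$ structure noted in the first step is exactly what rules out a branch-cut contribution and reduces the whole matter to this polar analysis.
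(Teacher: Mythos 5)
Your proposal is correct and follows essentially the same route as the paper: apply the inversion formula \eqref{N-Inversion}, evaluate the Bromwich integral by the residue theorem, treat each $s_k$ as a simple pole via \eqref{2.29}, and handle $s=0$ through the dichotomy \eqref{2.38}, yielding exactly \eqref{2.31}--\eqref{2.32}. Your additional observations---that $\mf R$ is genuinely meromorphic because it depends on $\omega=\sqrt{s/(\uptau d)}$ only through $\omega^{2}$, and that the contour-closing and termwise summation need justification---are sound refinements of points the paper's proof leaves implicit, not a different method.
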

In view of \eqref{2.30} of Lemma \ref{lem2.2} and \eqref{F}, \eqref{2.28} can be rewritten as 
 \begin{equation}\label{2.41}
 	\begin{split}
 		\hat{\theta}(x;s,\uptau)
 		=&-\frac{1}{d}\int_{\alpha}^{x}\bb N[\Theta(\alpha-y,\beta-x,t)]\, \bb N[\frac{r(y,t)}{(\frac{t^{m}}{\uptau^{m}}+\uptau^{m})^{\rho}}]dy\\
 		&-\frac{1}{\uptau d}\int_{\alpha}^{x}\bb N[\Theta(\alpha-y,\beta-x,t)]\eta(y)dy\\
 		&-\frac{1}{d}\int_{x}^{\beta}\bb N[\Theta(\alpha-x,\beta-y,t)]\, \bb N[\frac{r(y,t)}{(\frac{t^{m}}{\uptau^{m}}+\uptau^{m})^{\rho}}]dy\\
 		&-\frac{1}{\uptau d}\int_{x}^{\beta}\bb N[\Theta(\alpha-x,\beta-y,t)]\eta(y)dy.
 	\end{split}
 \end{equation}
 By the convolution formula \eqref{N-Convolution}, the inverse natural transform of \eqref{2.41} is
 \begin{equation}\label{2.42}
 	\begin{split}
 		\theta(x,t,\uptau)=&-\frac{1}{\uptau d}\int_{\alpha}^{x}\int_{0}^{t}\Theta(\alpha-y,\beta-x,t-\varsigma) \frac{r(y,\varsigma)}{(\frac{\varsigma ^m}{\uptau^{m}}+\uptau^{m})^{\rho}}d\varsigma dy\\
 		&-\frac{1}{\uptau d}\int_{\alpha}^{x}\Theta(\alpha-y,\beta-x,t)\eta(y)dy\\
 		&-\frac{1}{\uptau d}\int_{x}^{\beta}\int_{0}^{t}\Theta(\alpha-x,\beta-y,t-\varsigma)\frac{r(y,\varsigma)}{(\frac{\varsigma ^m}{\uptau^{m}}+\uptau^{m})^{\rho}}d\varsigma dy\\
 		&-\frac{1}{\uptau d}\int_{x}^{\beta}\Theta(\alpha-x,\beta-y,t)\eta(y)dy.
 	\end{split}
 \end{equation}
That is,
 \begin{equation}\label{2.43}  
 	\begin{split}
 		\theta(x,t,\uptau)=&-\frac{1}{\uptau d}\int_{\alpha}^{x}\Theta_{0}(\alpha-y,\beta-x)\eta(y)dy-\frac{1}{\uptau d}\int_{x}^{\beta} \Theta_{0}(\alpha-x,\beta-y)\eta(y)dy\\
 		&-\frac{1}{\uptau d}\int_{\alpha}^{x}\int_{0}^{t}\Theta_0(\alpha-y,\beta-x) \frac{r(y,\varsigma)}{(\frac{\varsigma ^m}{\uptau^{m}}+\uptau^{m})^{\rho}}d\varsigma dy\\
 		&-\frac{1}{\uptau d}\int_{x}^{\beta}\int_{0}^{t}\Theta_{0}(\alpha-x,\beta-y)\frac{r(y,\varsigma)}{(\frac{\varsigma ^m}{\uptau^{m}}+\uptau^{m})^{\rho}}d\varsigma dy\\
 		&-\frac{1}{\uptau d}\sum_{k=1}^{\infty}\frac{\langle  \mf b ,\mf C(\beta-x;s_{k},\uptau)\rangle}{\sqrt{\frac{s_{k}}{\uptau d}}\Delta^{'}(s_{k})}\int_{\alpha}^{x}\int_{0}^{t} \frac{e^{s_{k}(t-\varsigma)/\uptau} r(y,\varsigma)\langle  \mf a ,\mf C(\alpha-y;s_{k},\uptau)\rangle}{(\frac{\varsigma ^m}{\uptau^{m}}+\uptau^{m})^{\rho}}d\varsigma dy\\ 
 		&-\frac{1}{\uptau d}\sum_{k=1}^{\infty}\frac{\langle  \mf a ,\mf C(\alpha-x;s_{k},\uptau)\rangle }{\sqrt{\frac{s_{k}}{\uptau d}}\Delta^{'}(s_{k})}\int_{x}^{\beta}\int_{0}^{t} \frac{e^{s_{k}(t-\varsigma)/\uptau} r(y,\varsigma)\langle  \mf b ,\mf C(\beta-y;s_{k},\uptau)\rangle}{(\frac{\varsigma ^m}{\uptau^{m}}+\uptau^{m})^{\rho}}d\varsigma dy\\
 		&-\frac{1}{\uptau d}\sum_{k=1}^{\infty}\frac{e^{\frac{s_{k}t}{\uptau}}\langle  \mf b ,\mf C(\beta-x;s_{k},\uptau)\rangle}{\sqrt{\frac{s_{k}}{\uptau d}}\Delta^{'}(s_{k})}\int_{\alpha}^{x}\langle  \mf a ,\mf C(\alpha-y;s_{k},\uptau)\rangle \eta(y)dy\\
 		&-\frac{1}{\uptau d}\sum_{k=1}^{\infty}\frac{e^{\frac{s_{k}t}{\uptau}}\langle  \mf a ,\mf C(\alpha-x;s_{k},\uptau)\rangle }{\sqrt{\frac{s_{k}}{\uptau d}}\Delta^{'}(s_{k})}\int_{x}^{\beta}\langle  \mf b ,\mf C(\beta-y;s_{k},\uptau)\rangle \eta(y)dy.
 	\end{split}
 \end{equation}
From Lemma \ref{lem2.1}, one has
 \begin{equation*}
 	\langle\mf a,\mf C(\alpha-x;s,\uptau)\rangle \langle\mf b,\mf C(\beta-y;s,\uptau)= \langle\mf a,\mf C(\alpha-y;s,\uptau)\rangle \langle\mf b,\mf C(\beta-x;s,\uptau)\rangle, 
 \end{equation*}
at $s=0$ and $s=s_{k}\, (k=1,2,\cdots)$ the zeros of $ \Delta(s)\sinh\sqrt{\frac{s}{\uptau d}}(x-y) $. That results in
$$ \Theta_{0}(\alpha-y,\beta-x)=\Theta_{0}(\alpha-x,\beta-y)$$
 \begin{equation*}
 	\begin{split}
 		\langle\mf a,\mf C(\alpha-x;s_{k},\uptau)\rangle & \int_{x}^{\beta} \langle\mf b,\mf C(\beta-y;s_{k},\uptau)\rangle \vartheta(y) dy\\
 		= & \langle\mf b,\mf C(\beta-x;s_{k},\uptau)\rangle \int_{x}^{\beta} \langle\mf a,\mf C(\alpha-y;s_{k},\uptau)\rangle \vartheta(y) dy.
 	\end{split}
 \end{equation*}
The first conclusion is obvious when $ \Theta_{0}=0 $ in  \eqref{2.32}.
Thus, \eqref{2.43} can be simplified as
 \begin{equation}\label{v}   
 	\begin{split}
 		\theta(x,t,\uptau)=&-\frac{1}{\uptau d}\int_{\alpha}^{\beta}\Theta_{0}(\alpha-x,\beta-y)\eta(y)dy\\
 		&-\frac{1}{\uptau d}\int_{\alpha}^{\beta}\int_{0}^{t}\Theta_0(\alpha-x,\beta-y) \frac{r(y,\varsigma)}{(\frac{\varsigma ^m}{\uptau^{m}}+\uptau^{m})^{\rho}}d\varsigma dy\\
 		&-\frac{1}{\uptau d}\sum_{k=1}^{\infty}\frac{\langle  \mf b ,\mf C(\beta-x;s_{k},\uptau)\rangle }{\sqrt{\frac{s_{k}}{\uptau d}}\Delta^{'}(s_{k})}\int_{\alpha}^{\beta}\int_{0}^{t}\frac{e^{s_{k}(t-\varsigma)/\uptau} r(y,\varsigma)\langle  \mf a ,\mf C(\alpha-y;s_{k},\uptau)\rangle}{(\frac{\varsigma ^m}{\uptau^{m}}+\uptau^{m})^{\rho}}d\varsigma dy\\ 
 		&-\frac{1}{\uptau d}\sum_{k=1}^{\infty}\frac{e^{s_{k}t/\uptau}\langle  \mf b ,\mf C(\beta-x;s_{k},\uptau)\rangle }{\sqrt{\frac{s_{k}}{\uptau d}}\Delta^{'}(s_{k})}\int_{\alpha}^{\beta}\langle  \mf a ,\mf C(\alpha-y;s_{k},\uptau)\rangle \eta(y)dy.
 	\end{split}
 \end{equation}
 Next, we return to \eqref{2.27}. Using \eqref{N for derivatives} (for $n=1$) and \eqref{2.16}, \eqref{2.27} can be rewritten as  
 \begin{align}\label{2.45}    
 	\hat{\varphi}(x;s,\uptau) =& s\bb N[\lambda(t,\uptau)\zeta(t)]\frac{
 			\langle \mf b  ,\mf C(\beta-x;s,\uptau)\rangle}{s\Delta(s)}-s\bb N[\lambda(t,\uptau)\xi(t)]\frac{\langle \mf a  ,\mf C(\alpha-x;s,\uptau)\rangle}{s\Delta(s)} +\hat{\theta}(x;s,\uptau)\nonumber\\
 =& \big(\uptau \bb N[\frac{d}{dt}(\lambda(t,\uptau)\zeta(t))]+\lambda(0,\uptau)\zeta(0)\big)\frac{
 	\langle \mf b  ,\mf C(\beta-x;s,\uptau)\rangle}{s\Delta(s)} \nonumber\\
 &-\big(\uptau \bb N[\frac{d}{dt}(\lambda(t,\uptau)\xi(t))]+\lambda(0,\uptau)\xi(0)\big)\frac{\langle \mf a  ,\mf C(\alpha-x;s,\uptau)\rangle}{s\Delta(s)}+\hat{\theta}(x;s,\uptau).		
 \end{align}
 where $ \hat{\theta}(x;s,\uptau)$ is given in \eqref{2.28}. Now, we can obtain the solution $\varphi(x,t)$ of Problem \eqref{2.1}-\eqref{2.4} by operating the inversion formula \eqref{N-Inversion} in \eqref{2.45}. In doing so, we need the following lemma.
 \begin{lemma}\label{lem2.3}    
 	Assume that \eqref{2.29} holds true. Then, for each $y \in \bb R,\,\  t,\uptau>0$ and $ \textbf{L} \in \bb R^{2}$, we get 
 	\begin{equation}
 		\begin{split}
 			\bb N^{-1}\bigg\{\frac{\langle  \textbf{L} ,\mf C(y;s,\uptau)\rangle }{s\Delta(s)}\bigg\} =\Phi(y,t;\textbf{L})
 		\end{split}
 	\end{equation}
 	where,
 	\begin{equation}\label{R(y,t;l)}   
 		\begin{split}
 			\Phi(y,t;\textbf{L})&=\Phi_{0}(y;\textbf{L})+\sum_{k=1}^{\infty}\frac{e^{\frac{s_{k}t}{\uptau}} \langle  \textbf{L} ,\mf C(y;s_{k},\uptau)\rangle}{s_{k}\Delta^{'}(s_{k})},
 		\end{split}
 	\end{equation} 
 	and
 	\begin{equation}\label{R_0}
 		\Phi_0(y;\textbf{L})=
 		\begin{cases}
 			\lim\limits_{s\rightarrow 0}\frac{ \langle  \textbf{L} ,\mf C(y;s,\uptau)\rangle}{\Delta(s)}\quad  \quad \quad  \quad  \mbox{if} \quad \frac{ \langle  \textbf{L} ,\mf C(y;s,\uptau)\rangle}{s\Delta(s)}=O(\frac{1}{s})\\
 			\lim\limits_{s\rightarrow 0}\frac{\partial}{\partial s}\bigg(\frac{s\langle  \textbf{L} ,\mf C(y;s,\uptau)\rangle}{\Delta(s)}\bigg)\quad \mbox{if} \quad \frac{ \langle  \textbf{L} ,\mf C(y;s,\uptau)\rangle}{s\Delta(s)}=O(\frac{1}{s^{2}}).
 		\end{cases}
 	\end{equation}
 \end{lemma}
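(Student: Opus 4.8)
The plan is to mirror the residue-calculus argument already used for Lemma \ref{lem2.2}. First I would set
\[
\mf R(s,\uptau)=\frac{\langle \textbf{L},\mf C(y;s,\uptau)\rangle}{s\,\Delta(s)},
\]
apply the inversion formula \eqref{N-Inversion} to write
\[
\Phi(y,t;\textbf{L})=\bb N^{-1}\{\mf R(s,\uptau)\}=\frac{1}{2\pi i}\int_{c-i\infty}^{c+i\infty}e^{\frac{st}{\uptau}}\mf R(s,\uptau)\,ds,\qquad c,\uptau>0,
\]
and then evaluate this Bromwich integral by the residue theorem, exactly as in \eqref{2.34}--\eqref{2.35}. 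Before doing so I would record the structural fact that, although $\mf C$ and $\Delta$ individually involve $\sqrt{s/(\uptau d)}$, both numerator and denominator carry an odd power of $\sqrt{s}$, so these half-integer powers cancel and $\mf R(\cdot,\uptau)$ is genuinely meromorphic in $s$; hence the only singularities are $s=0$ together with the zeros $\{s_k\}$ of $\Delta$.

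Next I would compute the residues at the $s_k$. By assumption \eqref{2.29} each $s_k$ is a simple zero of $\Delta$ with $s_k\neq 0$, so it is a simple pole of $e^{st/\uptau}\mf R$. Using $\frac{d}{ds}\big(s\Delta(s)\big)\big|_{s=s_k}=\Delta(s_k)+s_k\Delta'(s_k)=s_k\Delta'(s_k)$ (because $\Delta(s_k)=0$), the residue at $s_k$ is
\[
\mbox{Res}\big[e^{\frac{st}{\uptau}}\mf R(s,\uptau);s_k\big]=\frac{e^{\frac{s_kt}{\uptau}}\langle\textbf{L},\mf C(y;s_k,\uptau)\rangle}{s_k\,\Delta'(s_k)},
\]
and summing over $k$ yields precisely the series in \eqref{R(y,t;l)}.

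The delicate point, and the step I expect to be the main obstacle, is the singularity at $s=0$, which here is one order worse than in Lemma \ref{lem2.2} because of the extra factor $1/s$. I would split into the two cases recorded in \eqref{R_0}. If $\mf R=O(1/s)$, then $s=0$ is a simple pole with residue $\lim_{s\to 0}s\,e^{st/\uptau}\mf R(s,\uptau)=\lim_{s\to 0}\langle\textbf{L},\mf C(y;s,\uptau)\rangle/\Delta(s)$, giving the first branch of $\Phi_0$. If instead $\mf R=O(1/s^2)$, then $s=0$ is a double pole and
\[
\mbox{Res}\big[e^{\frac{st}{\uptau}}\mf R(s,\uptau);0\big]=\lim_{s\to 0}\frac{d}{ds}\!\left(\frac{s\,e^{st/\uptau}\langle\textbf{L},\mf C(y;s,\uptau)\rangle}{\Delta(s)}\right).
\]
Carrying out this differentiation is where care is needed: writing $h(s):=s\langle\textbf{L},\mf C(y;s,\uptau)\rangle/\Delta(s)$, which is analytic at $0$, the product rule produces $h'(0)$ — matching the second branch of \eqref{R_0} — together with a term $\tfrac{t}{\uptau}h(0)$ coming from the $\tfrac{t}{\uptau}e^{st/\uptau}$ factor, so I would have to confirm that this $t$-linear contribution is either absorbed consistently into $\Phi_0$ or vanishes under the stated hypotheses.

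Finally, to make the residue-theorem step rigorous I would verify that $e^{st/\uptau}\mf R(s,\uptau)$ decays along the closing left-hand arcs so that the large semicircular contours contribute nothing and the resulting residue series converges — the same tacit ingredient already invoked in the proof of Lemma \ref{lem2.2}. Collecting the residue at $s=0$ with the series over the $s_k$ then delivers \eqref{R(y,t;l)}--\eqref{R_0}.
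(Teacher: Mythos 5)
Your proposal follows the paper's route exactly: the paper's entire proof of Lemma \ref{lem2.3} is the single sentence ``The proof is similar to Lemma \ref{lem2.2}'', i.e.\ precisely the Bromwich integral \eqref{N-Inversion} evaluated by residues that you outline, and your computation at the simple poles $s_k$ (via $\frac{d}{ds}\left(s\Delta(s)\right)\big|_{s=s_k}=s_k\Delta'(s_k)$ under hypothesis \eqref{2.29}) reproduces the series in \eqref{R(y,t;l)} correctly.

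However, the point you flag at $s=0$ and leave unresolved is a genuine gap, and it resolves \emph{against} the lemma as printed: the $t$-linear term neither vanishes nor can it be absorbed into $\Phi_0$, which is time-independent by definition. Writing $h(s)=s\langle\textbf{L},\mf C(y;s,\uptau)\rangle/\Delta(s)$ (analytic at $0$, by the same cancellation of half-integer powers you noted), the case $\mf R=O(1/s^{2})$ is exactly the case $h(0)\neq 0$ (otherwise one is back in the $O(1/s)$ case), and then the residue at the double pole is
\[
\lim_{s\to 0}\frac{d}{ds}\Bigl(e^{st/\uptau}h(s)\Bigr)=h'(0)+\frac{t}{\uptau}\,h(0),
\]
whose second summand is absent from \eqref{R_0}. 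This case is not vacuous: for Neumann data at both ends ($\imath=\ell=0$, $\iota=\l=1$) one finds $\Delta(s)=\frac{s}{\uptau d}\sinh\Bigl(\sqrt{\tfrac{s}{\uptau d}}\,(\alpha-\beta)\Bigr)\sim C\,s^{3/2}$ while $\langle\textbf{L},\mf C(y;s,\uptau)\rangle\sim\sqrt{\tfrac{s}{\uptau d}}\,(L_{1}y+L_{2})$, so $\mf R$ has a genuine double pole and $h(0)=\uptau d\,(L_{1}y+L_{2})/(\alpha-\beta)\neq 0$ whenever $L_{1}y+L_{2}\neq 0$. The resulting secular term $\frac{t}{\uptau}h(0)$ is the physically expected linear-in-time growth under flux boundary conditions. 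So a completed version of your argument does not confirm \eqref{R_0}; it shows the second branch must be amended to
\[
\lim_{s\to 0}\frac{\partial}{\partial s}\biggl(\frac{s\langle\textbf{L},\mf C(y;s,\uptau)\rangle}{\Delta(s)}\biggr)
+\frac{t}{\uptau}\lim_{s\to 0}\frac{s\langle\textbf{L},\mf C(y;s,\uptau)\rangle}{\Delta(s)},
\]
a correction the paper's proof-by-analogy never confronts. (Your remaining caveat --- decay along the closing arcs and convergence of the residue series --- is likewise assumed, not proved, in the paper's Lemma \ref{lem2.2}, so on that point you are no worse off than the paper.)
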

 \begin{proof}The proof is similar to Lemma \ref{lem2.2}.
 \end{proof}
 From Lemma \ref{lem2.3} we see that
 \begin{align*}
 	\bb N^{-1}\bigg\{\frac{\langle\mf b,\mf C(\beta-x;s,\uptau)\rangle}{s\Delta(s)}\bigg\} =\Phi(\beta-x,t;\mf b),
 \quad \quad
	\bb N^{-1}\bigg\{\frac{\langle\mf a,\mf C(\alpha-x;s,\uptau)\rangle}{s\Delta(s)}\bigg\} =\Phi(\alpha-x,t;\mf a).
\end{align*}
 Hence, in view  of the convolution formula \eqref{N-Convolution} and the inversion of natural transform \eqref{N-Inversion}, inverting \eqref{2.45} yields
 \begin{align}
  		\varphi(x,t,\uptau)=&\int_{0}^{t}\Phi(\beta-x,t-\varsigma;\mf b)\big( \tilde{\zeta}^{'}(\varsigma)+\tilde{\zeta}(0)\delta_{0}(\varsigma)\big)d\varsigma\nonumber\\
 		&-\int_{0}^{t}\Phi(\alpha-x,t-\varsigma;\mf a)\big( \tilde{\xi}^{'}(\varsigma)+\tilde{\xi}(0)\delta_{0}(\varsigma)\big)d\varsigma+\theta(x,t,\uptau),
  \end{align}
 where $ \tilde{\zeta}=\lambda(t,\uptau)\zeta(t) $, $ \tilde{\xi}=\lambda(t,\uptau)\xi(t) $, $ \delta_{0} $ is the well-known Dirac delta function, and $\theta(x,t,\uptau)$ is given by \eqref{v}. Then, using the basic property of the Dirac delta function, that is $\delta_0 (\varsigma)\Phi(\varsigma)=\Phi(0)$, results in
 \begin{align*}
 	 		\varphi(x,t,\uptau)=&\int_{0}^{t}\Phi(\beta-x,t-\varsigma;\mf b) \tilde{\zeta}^{'}(\varsigma)d\varsigma+\tilde{\zeta}(0)\Phi(\beta-x,t;\mf b) \nonumber\\
 		&-\int_{0}^{t}\Phi(\alpha-x,t-\varsigma;\mf a) \tilde{\xi}^{'}(\varsigma)d\varsigma-\tilde{\xi}(0)\Phi(\alpha-x,t;\mf a)+\theta(x,t,\uptau).
 \end{align*}
Integrating by parts, gives 
 \begin{align*}
 	\varphi(x,t,\uptau)=& \lambda(t,\uptau) \zeta(t) \Phi(\beta-x,0;\mf b)-\int_{0}^{t}\lambda(\xi,\uptau)D_{2}\Phi(\beta-x,t-\varsigma;\mf b)\zeta(\varsigma)d\varsigma \nonumber\\
 		&-\lambda(t,\uptau)\Phi(\alpha-x,0;\mf a) \xi(t)+\int_{0}^{t}\lambda(\xi,\uptau)D_{2}\Phi(\alpha-x,t-\varsigma;\mf a) \xi(\varsigma)d\varsigma \nonumber\\
 		&+\theta(x,t,\uptau),
 \end{align*}
 where $ D_{2}=\frac{\partial}{\partial t}\Phi(x,t;\bf b) $. Substituting from \eqref{R(y,t;l)}, gives
 \begin{align*}
 		\varphi(x,t,\uptau)=& \lambda(t,\uptau) \zeta(t) \left(\Phi_0(\beta-x;\mf b)+\sum_{k=1}^{\infty}\frac{ \langle  \mf b ,\mf C(\beta-x;s_{k},\uptau)\rangle}{s_{k} \Delta{'}(s_{k})}\right)\\
 		&-\int_{0}^{t}\lambda(\xi,\uptau)\zeta(\varsigma)\left(\sum_{k=1}^{\infty}\frac{e^{\frac{s_{k}(t-\varsigma)} {\uptau}} \langle  \mf b ,\mf C(\beta-x;s_{k},\uptau)\rangle}{ \Delta^{'}(s_{k})}\right) d\varsigma\\
 		&-\lambda(t,\uptau)\xi(t)\left(\Phi_0(\alpha-x;\mf a)+\sum_{k=1}^{\infty}\frac{ \langle  \mf a ,\mf C(\alpha-x;s_{k},\uptau)\rangle}{s_{k} \Delta^{'}(s_{k})}\right) \\
 		&+\int_{0}^{t}\lambda(\xi,\uptau)\xi(\varsigma)\left(\sum_{k=1}^{\infty}\frac{e^{\frac{s_{k}(t-\varsigma)}{\uptau}} \langle  \mf a ,\mf C(\alpha-x;s_{k},\uptau)\rangle}{ \Delta^{'}(s_{k})}\right) d\varsigma\\
 		&+\theta(x,t,\uptau),
 \end{align*}
 with $\theta(x,t,\uptau)$ is given by \eqref{v}. This result can be rewritten as 
 \begin{align*}
 		\varphi(x,t,\uptau)=&\lambda(t,\uptau)\zeta(t)\Phi_0(\beta-x;\mf b)\\
 		&+\sum_{k=1}^{\infty}\frac{\zeta(t) \lambda(t,\uptau)-\int_{0}^{t}\lambda(\xi,\uptau)s_{k}e^{s_{k}(t-\varsigma)/\uptau}\zeta(\varsigma)d\varsigma}{s_{k} \Delta^{'}(s_{k})}\langle  \mf b ,\mf C(\beta-x;s_{k},\uptau)\rangle\\
 		&- \lambda(t,\uptau) \xi(t)\Phi_0(\alpha-x;\mf a)\\
 		&-\sum_{k=1}^{\infty}\frac{\lambda(t,\uptau) \xi(t) -\int_{0}^{t}\lambda(\xi,\uptau)s_{k}e^{s_{k}(t-\varsigma)/\uptau}\xi(\varsigma)d\varsigma}{s_{k} \Delta^{'}(s_{k})}\langle  \mf a ,\mf C(\alpha-x;s_{k},\uptau)\rangle\\
 		&+\theta(x,t,\uptau),
 \end{align*}
or
\begin{align}\label{u}
 		\varphi(x,t,\uptau)= & \lambda(t,\uptau) \zeta(t) \Phi_0(\beta-x;\mf b)+\sum_{k=1}^{\infty}\frac{ \Gamma_{k}\zeta(t)}{s_{k} \Delta^{'}(s_{k})}\langle  \mf b ,\mf C(\beta-x;s_{k},\uptau)\rangle \nonumber\\
 		&- \lambda(t,\uptau) \xi(t) \Phi_0(\alpha-x;\mf a)-\sum_{k=1}^{\infty}\frac{ \Gamma_{k}\xi(t)}{s_{k} \Delta^{'}(s_{k})}\langle  \mf a ,\mf C(\alpha-x;s_{k},\uptau)\rangle \nonumber\\
 		&+\theta(x,t,\uptau),
 \end{align}
 where $ \Gamma_{k} $ is the operator defined as 
 \begin{equation}\label{U_k}
 	\Gamma_{k}\phi(t)=\lambda(t,\uptau) \phi(t) -s_{k}\int_{0}^{t}\lambda(\xi,\uptau)e^{\frac{s_{k}(t-\varsigma)}{\uptau}}\phi(\varsigma)d\varsigma.
 \end{equation}
The integral in \eqref{U_k} is the Laplacian convolution formula for $\lambda(t,\uptau)\phi(t)$ with $e^{\frac{s_{k}t}{\uptau}}$. As a result, \eqref{u}, together with \eqref{v} and \eqref{U_k}, expresses the solution of Problem \eqref{2.1}-\eqref{2.4}.
 \begin{remark}
When $\rho=0$ and $\ r_j=\nu_j=0,\mbox{ for all } j=1,\cdots,n,$ Problem \eqref{2.1}-\eqref{2.4} and its solution
\begin{align*}  
	\varphi(x,t,\uptau=1)= & \zeta(t) \Phi_0(\beta-x;\mf b)+\sum_{k=1}^{\infty}\frac{\tilde{\Gamma}_{k}\zeta(t)}{s_{k} \Delta^{'}(s_{k})}\langle  \mf b ,\mf C(\beta-x,s_{k})\rangle \nonumber\\
	&-  \xi(t) \Phi_0(\alpha-x;\mf a)-\sum_{k=1}^{\infty}\frac{\tilde{\Gamma}_{k}\xi(t)}{s_{k} \Delta^{'}(s_{k})}\langle  \mf a ,\mf C(\alpha-x,s_{k})\rangle 
	+\theta(x,t),
\end{align*}
with  
$\Phi_0(\alpha-x;\mf a)$, $\mf C(y,s_{k})=\mf C(y;s_{k},\uptau=1)$ defined as \eqref{R_0}, \eqref{2.13}, respectively, 
\begin{equation*}     
	\tilde{\Gamma}_{k}\phi(t)= \phi(t) -s_{k}\int_{0}^{t} e^{s_{k}(t-\varsigma)}\phi(\varsigma)d\varsigma,
\end{equation*}
and
\begin{align*}    
\theta(x,t)=&-\frac{1}{ d}\int_{\alpha}^{\beta}\Theta_{0}(\alpha-x,\beta-y)\eta(y)dy\\
&- \sum_{k=1}^{\infty}\frac{e^{s_{k}t}\langle  \mf b ,\mf C(\beta-x;s_{k},1)\rangle }{\sqrt{d s_{k}} \Delta^{'}(s_{k})}\int_{\alpha}^{\beta}\langle  \mf a ,\mf C(\alpha-y;s_{k},1)\rangle \eta(y)dy,
\end{align*}
%
are reduced to that in \cite[Section 3]{rodrigo2016solution}.  
\end{remark} 
\section{Multilayer nonhomogeneous diffusion system}\label{sec3}
Here, we are seeking the solution of our main problem defined in \eqref{gPDE}-\eqref{Inner Bc2}, which was converted into a sequence of initial boundary value problems \eqref{BPV in 1 layer}-\eqref{BPV in n layer}. For the convenient of the reader and in order to draw the full picture in an easy way, we start with solving the bilayer diffusion problem in the following subsection, then we move to the general case in subsection \ref{subsec_nlayer}.
 \subsection{\textbf{Solution of a two-layer problem}}
 For the two-layer problem, we have
 \begin{align} 
 	&\frac{\partial \varphi_{1}}{\partial t}=d_{1} \frac{\partial^{2} \varphi_{1}}{\partial x^{2}}+\lambda(t,\uptau)r_{1}(x ,t), \quad x \in (x_{0},x_{1}),\ t,\uptau>0, \label{twolayer1}\\
 	&\varphi_{1}(x,0)=\eta_{1}(x),\quad x \in [x_{0},x_{1}],\\
 	&\imath \varphi_{1}(x_{0},t)+\iota \frac{\partial \varphi_{1}}{\partial x}(x_{0},t)=\lambda(t,\uptau)\zeta_{1}(t), \quad t \ge 0,\ \uptau >0, \\
 	&\nu_{1} \varphi_1(x_{1},t)+\mu_{1} \frac{\partial \varphi_{1}}{\partial x}(x_{1},t)=\lambda(t,\uptau)\xi_{1}(t), \quad t \ge 0,\ \uptau>0,
 \end{align}
 and
 \begin{align}
 	&\frac{\partial \varphi_{2}}{\partial t}=d_{2} \frac{\partial^{2} \varphi_{2}}{\partial x^{2}}+\lambda(t,\uptau)r_{2}(x ,t), \quad x \in (x_{1},x_{2}), \ t,\uptau>0,\\
 	&\varphi_{2}(x,0)=\eta_{2}(x),\quad x \in [x_{1},x_{2}],\\
 	&\nu_{2}\varphi_{2}(x_{1},t)+\mu_{2} \frac{\partial \varphi_{2}}{\partial x}(x_{1},t)=\lambda(t,\uptau)\zeta_{2}(t), \quad t \ge 0,\ \uptau >0,\\
 	&\ell \varphi_{2}(x_{2},t)+\l \frac{\partial \varphi_{2}}{\partial x}(x_{2},t)=\lambda(t,\uptau)\xi_{2}(t), \quad t \ge 0,\ \uptau>0. \label{twolayern}
 \end{align}
Similar to what we denote in Section \ref{sec2}, we define the following vector notation $  \mf a_{1}=(\imath,\iota) $, $  \mf b_{1}=(\nu_{1},\mu_{1}) $, $ \mf a_{2}=(\nu_{2},\mu_{2}) $, $ \mf b_{2}=(\ell,\l) $, and
 \begin{equation}
 	\begin{split}
 		\mf L_{1}(y;s,\uptau)&=\bigg(\cosh\sqrt{\frac{s}{\uptau d_{1}}}y,\sqrt{\frac{s}{\uptau d_{1}}}\sinh\sqrt{\frac{s}{\uptau d_{1}}}y\bigg),\\
 		\mf L_{2}(y;s,\uptau)&=\bigg(\cosh\sqrt{\frac{s}{\uptau d_{2}}}y,\sqrt{\frac{s}{\uptau d_{2}}}\sinh\sqrt{\frac{s}{\uptau d_{2}}}y\bigg),\\
 		\mf C_{1}(y;s,\uptau)&=\bigg(\sinh\sqrt{\frac{s}{\uptau d_{1}}}y,\sqrt{\frac{s}{\uptau d_{1}}}\cosh\sqrt{\frac{s}{\uptau d_{1}}}y\bigg),\\
 		\mf C_{2}(y;s,\uptau)&=\bigg(\sinh\sqrt{\frac{s}{\uptau d_{2}}}y,\sqrt{\frac{s}{\uptau d_{2}}}\cosh\sqrt{\frac{s}{\uptau d_{2}}}y\bigg).
 	\end{split}
 \end{equation}
Also, analogues to \eqref{2.19}, define 
 \begin{equation}
 	\begin{split}
 		\Delta_{1}(s)&=\langle  \mf b_{1} ,\mf C_{1}(x_{1},s,\uptau)\rangle \langle  \mf a_{1} ,\mf L_{1}(x_{0},s,\uptau)\rangle-\langle  \mf a_{1} ,\mf C_{1}(x_{0},s,\uptau)\rangle\langle  \mf b_{1} ,\mf L_{1}(x_{1},s,\uptau)\rangle,\\
 		\Delta_{2}(s)&=\langle  \mf b_{2} ,\mf C_{2}(x_{2},s,\uptau)\rangle \langle  \mf a_{2} ,\mf L_{2}(x_{1},s,\uptau)\rangle-\langle  \mf a_{2} ,\mf C_{2}(x_{1},s,\uptau)\rangle\langle  \mf b_{2} ,\mf L_{2}(x_{2},s,\uptau)\rangle.
 	\end{split}
 \end{equation}
Further, similar to \eqref{2.29}, suppose that there are nonzero simple roots $ \{s_{k}^{(1)}\} _{k=1}^{\infty}$ and $ \{s_{k}^{(2)}\} _{k=1}^{\infty}$ of the functions $	\Delta_{1}(s)$ and $\Delta_{2}(s)$, respectively. That is, 
 \begin{equation}
 	\Delta_{1}(s_{k}^{(1)})=0, \quad \Delta_{1}^{'}(s_{k}^{(1)})\neq0,\quad \Delta_{2}(s_{k}^{(2)})=0, \quad \Delta_{2}^{'}(s_{k}^{(2)})\neq0 \quad (k=1,2,...).  
 \end{equation}
 Therefore, according to \eqref{v}, we obtain
\begin{align}\label{v_1}   
  		\theta_{1}(x,t,\uptau)=&-\frac{1}{\uptau d_{1}}\int_{x_{0}}^{x_{1}}\Theta^{(1)}_{0}(x_{0}-x,x_{1}-y)\eta_{1}(y)dy \nonumber\\
 		&-\frac{1}{\uptau d_{1}}\int_{x_{0}}^{x_{1}}\int_{0}^{t}\Theta^{(1)}_0(x_{0}-x,x_{1}-y) \frac{r_{1}(y,\varsigma)}{(\frac{\varsigma ^m}{\uptau^{m}}+\uptau^{m})^{\rho}}d\varsigma dy \nonumber\\
 		&-\frac{1}{\uptau d_{1}}\sum_{k=1}^{\infty}\frac{\langle  \mf b_{1} ,\mf C_{1}(x_{1}-x,s^{(1)}_{k},\uptau)\rangle }{\sqrt{\frac{s^{(1)}_{k}}{\uptau d_{1}}}\Delta_{1}^{'}(s^{(1)}_{k})}\int_{x_{0}}^{x_{1}}\int_{0}^{t}\frac{e^{s^{(1)}_{k}(t-\varsigma)/\uptau}r_{1}(y,\varsigma)\langle  \mf a_{1} ,\mf C_{1}(x_{0}-y,s^{(1)}_{k},\uptau)\rangle}{(\frac{\varsigma ^m}{\uptau^{m}}+\uptau^{m})^{\rho}}d\varsigma dy \nonumber\\ 
 		&-\frac{1}{\uptau d_{1}}\sum_{k=1}^{\infty}\frac{e^{s^{(1)}_{k}t/\uptau}\langle  \mf b_{1} ,\mf C_{1}(x_{1}-x,s^{(1)}_{k},\uptau)\rangle }{\sqrt{\frac{s^{(1)}_{k}}{\uptau d_{1}}}\Delta_{1}^{'}(s^{(1)}_{k})}\int_{x_{0}}^{x_{1}}\langle  \mf a_{1} ,\mf C_{1}(x_{0}-y,s^{(1)}_{k},\uptau)\rangle \eta_{1}(y)dy
\end{align}
\begin{align}\label{v_2}
 		\theta_{2}(x,t,\uptau)=&-\frac{1}{\uptau d_{2}}\int_{x_{1}}^{x_{2}}\Theta^{(2)}_{0}(x_{1}-x,x_{2}-y)\eta_{2}(y)dy \nonumber\\
 		&-\frac{1}{\uptau d_{2}}\int_{x_{1}}^{x_{2}}\int_{0}^{t}\Theta^{(2)}_0(x_{1}-x,x_{2}-y) \frac{r_{2}(y,\varsigma)}{(\frac{\varsigma ^m}{\uptau^{m}}+\uptau^{m})^{\rho}}d\varsigma dy \nonumber\\
 		&-\frac{1}{\uptau d_{2}}\sum_{k=1}^{\infty}\frac{\langle  \mf b_{2} ,\mf C_{2}(x_{2}-x,s^{(2)}_{k},\uptau)\rangle }{\sqrt{\frac{s^{(2)}_{k}}{\uptau d_{2}}}\Delta_{2}^{'}(s^{(2)}_{k})}\int_{x_{1}}^{x_{2}}\int_{0}^{t}\frac{e^{s^{(2)}_{k}(t-\varsigma)/\uptau}r_{2}(y,\varsigma)\langle  \mf a_{2} ,\mf C_{2}(x_{1}-y,s^{(2)}_{k},\uptau)\rangle}{(\frac{\varsigma ^m}{\uptau^{m}}+\uptau^{m})^{\rho}}d\varsigma dy \nonumber\\ 
 		&-\frac{1}{\uptau d_{2}}\sum_{k=1}^{\infty}\frac{e^{s^{(2)}_{k}t/\uptau}\langle  \mf b_{2} ,\mf C_{2}(x_{2}-x,s^{(2)}_{k},\uptau)\rangle }{\sqrt{\frac{s^{(2)}_{k}}{\uptau d_{2}}}\Delta_{2}^{'}(s^{(2)}_{k})}\int_{x_{1}}^{x_{2}}\langle  \mf a_{2} ,\mf C_{2}(x_{1}-y,s^{(2)}_{k},\uptau)\rangle \eta_{2}(y)dy,
 \end{align}
 where, $ \Theta^{(1)}_0 $ and $ \Theta^{(2)}_0 $ can be defined as in Lemma \ref{lem2.2}.\\
 Also, similar to \eqref{u}, with the respective forms $ \Phi_0^{(1)} $ and $ \Phi_0^{(2)} $ from Lemma \ref{lem2.3} and the matching condition $ \xi_{1}(t) =\zeta_{2}(t) $ we get
 \begin{align}\label{u_1}   
  		\varphi_{1}(x,t,\uptau)=& \lambda(t,\uptau)\zeta_{1}(t)\Phi^{(1)}_{0}(x_{1}-x;\mf b_{1})+\sum_{k=1}^{\infty}\frac{\Gamma^{(1)}_{k}\zeta_{1}(t)}{s^{(1)}_{k}\Delta_{1}^{'}(s^{(1)}_{k})}\langle  \mf b_{1} ,\mf C_{1}(x_{1}-x,s^{(1)}_{k},\uptau)\rangle \nonumber\\
 		&- \lambda(t,\uptau)\xi_{1}(t)\Phi^{(1)}_{0}(x_{0}-x;\mf a_{1})-\sum_{k=1}^{\infty}\frac{\Gamma^{(1)}_{k}\xi_{1}(t)}{s^{(1)}_{k}\Delta_{1}^{'}(s^{(1)}_{k})}\langle  \mf a_{1} ,\mf C_{1}(x_{0}-x,s^{(1)}_{k},\uptau)\rangle \nonumber\\
 		& + \theta_{1}(x,t,\uptau),
 \end{align}
 \begin{align}\label{u_2}
 		\varphi_{2}(x,t,\uptau)=&\lambda(t,\uptau) \xi_{1}(t)\Phi^{(2)}_{0}(x_{2}-x;\mf b_{2})+\sum_{k=1}^{\infty}\frac{\Gamma^{(2)}_{k}\xi_{1}(t)}{s^{(2)}_{k}\Delta_{2}^{'}(s^{(2)}_{k})}\langle  \mf b_{2} ,\mf C_{2}(x_{2}-x,s^{(2)}_{k},\uptau)\rangle \nonumber\\
 		&-  \lambda(t,\uptau)\xi_{2}(t)\Phi^{(2)}_{0}(x_{1}-x;\mf a_{2})-\sum_{k=1}^{\infty}\frac{\Gamma^{(2)}_{k}\xi_{2}(t)}{s^{(2)}_{k}\Delta_{2}^{'}(s^{(2)}_{k})}\langle  \mf a_{2} ,\mf C_{2}(x_{1}-x,s^{(2)}_{k},\uptau)\rangle \nonumber\\
 		&+ \theta_{2}(x,t,\uptau),
 \end{align}
 where the operators $ \Gamma_{k}^{(1)} $ and $  \Gamma_{k}^{(2)} $ are obtained from \eqref{U_k}. The matching condition $ \varphi_{1}(x_{1},t)=\Lambda_{1}\varphi_{2}(x_{1},t) $ yields
 \begin{align}\label{u_1 =k_1 u_2}   
  		\bigg( & \Lambda_{1} \lambda(t,\uptau)\Phi^{(2)}_{0}(x_{2}-x_{1};\mf b_{2})+\lambda(t,\uptau)\Phi^{(1)}_{0}(x_{0}-x_{1};\mf a_{1})\bigg)\xi_{1}(t) \nonumber\\
 		&+\sum_{k=1}^{\infty}\left(\frac{\langle  \mf a_{1} ,\mf C_{1}(x_{0}-x_{1},s^{(1)}_{k},\uptau)\rangle}{s^{(1)}_{k} \Delta_{1}^{'}(s^{(1)}_{k})} \Gamma^{(1)}_{k}\xi_{1}(t)-\frac{\Lambda_{1}\langle  \mf b_{2} ,\mf C_{2}(x_{2}-x_{1},s^{(2)}_{k},\uptau)\rangle}{s^{(2)}_{k} \Delta_{2}^{'}(s^{(2)}_{k})} \Gamma^{(2)}_{k}\xi_{1}(t)\right) \nonumber\\
 		= & \lambda(t,\uptau) \zeta_{1}(t)\Phi^{(1)}_{0}(0;\mf b_{1})+ \theta_{1}(x_{1},t,\uptau) 
 		+\sum_{k=1}^{\infty}\frac{\langle  \mf b_{1} ,\mf C_{1}(0,s^{(1)}_{k},\uptau)\rangle} {s^{(1)}_{k} \Delta_{1}^{'}(s^{(1)}_{k})} \Gamma^{(1)}_{k}\zeta_{1}(t) \nonumber\\
 		& + \Lambda_{1} \lambda(t,\uptau) \xi_{2}(t)\Phi^{(2)}_{0}(0;\mf a_{2})+\sum_{k=1}^{\infty}\frac{\Lambda_{1}\langle  \mf a_{2} ,\mf C_{2}(0,s^{(2)}_{k},\uptau)\rangle}{s^{(2)}_{k} \Delta_{2}^{'}(s^{(2)}_{k})} \Gamma^{(2)}_{k}\xi_{2}(t) \nonumber\\
 		& - \Lambda_{1} \theta_{2}(x_{1},t,\uptau).
 \end{align}
 For the unknown function $ \xi_{1} $ we can rewrite the linear integral equation \eqref{u_1 =k_1 u_2} as
 \begin{equation}\label{R}
 	\lambda(t,\uptau)\xi_{1}(t)+\sum_{k=1}^{\infty}\big(a_{k} \Gamma^{(1)}_{k}\xi_{1}(t)+b_{k} \Gamma^{(2)}_{k}\xi_{1}(t)\big)=c(t),
 \end{equation}
 where,
 \begin{equation}
 	\begin{split}
 		a_{k}&=\frac{\langle  \mf a_{1} ,\mf C_{1}(x_{0}-x_{1},s^{(1)}_{k},\uptau)\rangle}{s^{(1)}_{k} \Delta_{1}^{'}(s^{(1)}_{k})\big(\Lambda_{1} \Phi^{(2)}_{0}(x_{2}-x_{1};\mf b_{2})+ \Phi^{(1)}_{0}(x_{0}-x_{1};\mf a_{1})\big)},\\
 		b_{k}&=-\frac{\Lambda_{1}\langle  \mf b_{2} ,\mf C_{2}(x_{2}-x_{1},s^{(2)}_{k},\uptau)\rangle}{s^{(2)}_{k} \Delta_{2}^{'}(s^{(2)}_{k})\big(\Lambda_{1} \Phi^{(2)}_{0}(x_{2}-x_{1};\mf b_{2})+ \Phi^{(1)}_{0}(x_{0}-x_{1};\mf a_{1})\big)},
 	\end{split}
 \end{equation}
 and
 \begin{equation}
 	\begin{split}
 		c(t)=&\frac{1}{\Lambda_{1} \Phi^{(2)}_{0}(x_{2}-x_{1};\mf b_{2})+ \Phi^{(1)}_{0}(x_{0}-x_{1};\mf a_{1})}\bigg(\lambda(t,\uptau)\zeta_{1}(t)\Phi^{(1)}_{0}(0;\mf b_{1})\\
 		&+ \theta_{1}(x_{1},t,\uptau)+\sum_{k=1}^{\infty}\frac{\langle  \mf b_{1} ,\mf C_{1}(0,s^{(1)}_{k},\uptau)\rangle}{s^{(1)}_{k} \Delta_{1}^{'}(s^{(1)}_{k})} \Gamma^{(1)}_{k}\zeta_{1}(t)\\
 		&+\Lambda_{1} \lambda(t,\uptau)\xi_{2}(t)\Phi^{(2)}_{0}(0;\mf a_{2})+\sum_{k=1}^{\infty}\frac{\Lambda_{1}\langle  \mf a_{2} ,\mf C_{2}(0,s^{(2)}_{k},\uptau)\rangle}{s^{(2)}_{k} \Delta_{2}^{'}(s^{(2)}_{k})} \Gamma^{(2)}_{k}\xi_{2}(t)\\
 		&-\Lambda_{1} \theta_{2}(x_{1},t,\uptau)\bigg).
 	\end{split}
 \end{equation}
 Inspire of the convolution formula \eqref{N-Convolution}, the natural transform of \eqref{R} is
 \begin{equation*}     
 	\begin{split}
 		\bb N[\xi_{1}(t)\lambda(t,\uptau)] &(s,\uptau)+\sum_{k=1}^{\infty}\bigg(a_{k}\big(\bb N[\xi_{1}(t)\lambda(t,\uptau)](s,\uptau)-\frac{\uptau s^{(1)}_{k}}{s-s^{(1)}_{k}}\bb N[\xi_{1}(t)\lambda(t,\uptau)]\big)\\
 		&+b_{k}\big(\bb N[\xi_{1}(t)\lambda(t,\uptau)](s,\uptau)-\frac{\uptau s^{(2)}_{k}}{s-s^{(2)}_{k}}\bb N[\xi_{1}(t)\lambda(t,\uptau)]\big)\bigg)=\bb N[c(t)](s,\uptau),
 	\end{split}
 \end{equation*}
 which can be rewritten as 
 \begin{equation*}
 	\begin{split}
 		\big(1-\bb N[\psi(t)](s,\uptau)\big)\bb N[\xi_{1}&(t)\lambda(t,\uptau)](s,\uptau)=\bb N[c(t)](s,\uptau).
 	\end{split}
 \end{equation*}
That is,
 \begin{equation}
	\begin{split}
		\bb N[\xi_{1}(t)\lambda(t,\uptau)](s,\uptau)=&\frac{1}{\big(1-\bb N[\psi(t)](s,\uptau)\big)}\bigg\{\bb N[c(t)](s,\uptau)\bigg\}\\
		=&\bigg(1+\bb N[\psi(t)](s,\uptau)+\big(\bb N[\psi(t)](s,\uptau)\big)^{2}+...\bigg)\bigg\{\bb N[c(t)](s,\uptau)\bigg\}\\
		=&\bb N[c(t)](s,\uptau)+\bb N[c(t)](s,\uptau)\sum_{m=1}^{\infty}\big(\bb N[\psi(t)](s,\uptau)\big)^{m}.
	\end{split}
\end{equation}
 where
 \begin{equation}\label{YYY}
 	\begin{split}
 		\bb N[\psi(t)](s,\uptau)=&-\sum_{k=1}^{\infty}\bigg(a_{k}\big(1-\frac{\uptau s^{(1)}_{k}}{s-s^{(1)}_{k}}\big)+b_{k}\big(1-\frac{\uptau s^{(2)}_{k}}{s-s^{(2)}_{k}}\big)\bigg),
 	\end{split}
 \end{equation}
for which the inverse natural transform is
 \begin{equation}
 	\begin{split}
 		\psi(t)=&-\uptau\sum_{k=1}^{\infty}\big(a_{k}+b_{k}\big)\delta_{0}(t)
 		+\uptau\sum_{k=1}^{\infty}\bigg(a_{k}s^{(1)}_{k}e^{\frac{s_{k}^{(1)}}{\uptau}t}+b_{k}s^{(2)}_{k}e^{\frac{s_{k}^{(2)}}{\uptau}t}\bigg),
 	\end{split}
 \end{equation}
 where $ \delta_{0} $ is the Dirac delta function. Hence, we have 
%
 \begin{equation}\label{xi_1}    
	\begin{split}
		\xi_{1}(t)=&\frac{1}{\lambda(t,\uptau)}\left(c(t)+\sum_{m=1}^{\infty}\psi_{m}\ast c(t)\right),
	\end{split}
\end{equation}
 where for $ m\geq 2 $, $ \psi_{m} $ is the $ m- $times self-convolution of $ \psi $. 
Thus, one can conclude the solution to the bilayer diffusion problem \eqref{twolayer1}-\eqref{twolayern} by the formulas \eqref{u_1} and \eqref{u_2}, together with \eqref{v_1} and \eqref{v_2}, with $ \xi_{1} $ be given in \eqref{xi_1}. Now, it's time to attack the main problem in the following subsection.
 \subsection{\textbf{Solution of a multi-layer problem}}\label{subsec_nlayer}
 Here, we investigate the solvability of the main problem \eqref{gPDE}-\eqref{Inner Bc2}, through solving the initial boundary value problems \eqref{BPV in 1 layer}-\eqref{BPV in n layer}.
 Similar to what we have denoted in Section 2, we consider the following notations
 \begin{equation}
	\mf a_{j}=
	\begin{cases}
		(\imath,\iota),\quad \quad\quad\quad\quad\ j=1,\\
		(\nu_j,\mu_j),\quad \qquad \  \quad 2\leq j\le n,
	\end{cases}
\end{equation}
 \begin{equation}
	\mf b_{j}=
	\begin{cases}
		(\nu_j,\mu_j),\quad \qquad \  1\le j\le n-1,\\
		(\ell,\l),\quad \quad\quad\quad\quad\ \quad j=n,
	\end{cases}
\end{equation}
 and, , for all $ 1\le j\le n $
 \begin{equation}
 	\begin{split}
 		\mf L_{j}(y;s,\uptau)&=\bigg(\cosh\sqrt{\frac{s}{\uptau d_{j}}}y,\sqrt{\frac{s}{\uptau d_{j}}}\sinh\sqrt{\frac{s}{\uptau d_{j}}}y\bigg),\\
 		\mf C_{j}(y;s,\uptau)&=\bigg(\sinh\sqrt{\frac{s}{\uptau d_{j}}}y,\sqrt{\frac{s}{\uptau d_{j}}}\cosh\sqrt{\frac{s}{\uptau d_{j}}}y\bigg).\\
 	\end{split}
 \end{equation}
 Moreover, define
 \begin{equation}
 	\begin{split}
 		\Delta_{j}(s)&=\langle  \mf b_{j} ,\mf C_{j}(x_{j},s,\uptau)\rangle \langle  \mf a_{j} ,\mf L_{j}(x_{j-1},s,\uptau)\rangle-\langle  \mf a_{j} ,\mf C_{j}(x_{j-1},s,\uptau)\rangle\langle  \mf b_{j} ,\mf L_{j}(x_{j},s,\uptau)\rangle,
 	\end{split}
 \end{equation}
 and let $ \{s_{k}^{(j)}\} _{k=1}^{\infty}$ be the sequence of zeros of the function $\Delta_{j}(s)$ for all $ 1\le j\le n $, i.e.,
 \begin{equation}
 	\Delta_{j}(s_{k}^{(j)})=0, \quad \Delta_{j}^{'}(s_{k}^{(j)})\neq0,\quad (k=1,2,...).  
 \end{equation}
 Analogue to the computations of \eqref{v} and \eqref{u}, we have for the current case, for all $j=1,\cdots,n$
 \begin{align}
 		\theta_{j}(x,t,\uptau)=&-\frac{1}{\uptau d_{j}}\int_{x_{j-1}}^{x_{j}}\Theta^{(j)}_{0}(x_{j-1}-x,x_{j}-y)\eta_{j}(y)dy\nonumber\\
 		&-\frac{1}{\uptau d_{j}}\int_{x_{j-1}}^{x_{j}}\int_{0}^{t}\Theta^{(j)}_0(x_{j-1}-x,x_{j}-y) \frac{r_{j}(y,\varsigma)}{(\frac{\varsigma ^m}{\uptau^{m}}+\uptau^{m})^{\rho}}d\varsigma dy\nonumber\\
 		&-\frac{1}{\uptau d_{j}}\sum_{k=1}^{\infty}\frac{\langle  \mf b_{j} ,\mf C_{j}(x_{j}-x,s_{k}^{(j)},\uptau)\rangle }{\sqrt{\frac{s_{k}^{(j)}}{\uptau d_j}}\Delta_j^{'}(s_{k}^{(j)})}\int_{x_{j-1}}^{x^{j}}\int_{0}^{t}\frac{e^{s_{k}^{(j)}(t-\varsigma)/\uptau}r_{j}(y,\varsigma)\langle  \mf a_{j} ,\mf C(x_{j-1}-y,s_{k}^{(j)},\uptau)\rangle}{(\frac{\varsigma ^m}{\uptau^{m}}+\uptau^{m})^{\rho}}d\varsigma dy\nonumber\\ 
 		&-\frac{1}{\uptau d_{j}}\sum_{k=1}^{\infty}\frac{e^{s_{k}^{(j)}t/\uptau}\langle  \mf b_{j} ,\mf C(x_{j}-x,s_{k}^{(j)},\uptau)\rangle }{\sqrt{\frac{s_{k}^{(j)}}{\uptau d_j}}\Delta_{j}^{'}(s_{k}^{(j)})}\int_{x_{j-1}}^{x_{j}}\langle  \mf a_{j} ,\mf C(x_{j-1}-y,s_{k}^{(j)},\uptau)\rangle \eta_{j}(y)dy,\label{vjnlayer}
 \end{align}
 where, $ \Theta^{(j)}_0 $ can be defined in a similar way as in Lemma \ref{lem2.2}, and 
 \begin{align}
 		\varphi_{j}(x,t,\uptau)=&\lambda(t,\uptau) \zeta_{j}(t) \Phi_j(x_{j}-x,0;\mf b_{j})-\int_{0}^{t}\lambda(\xi,\uptau)D_{2}\Phi_j(x_{j}-x,t-\varsigma;\mf b_{j})\zeta_{j}(\varsigma)d\varsigma \nonumber\\
 		&-\lambda(t,\uptau) \xi_{j}(t) \Phi_j(x_{j-1}-x,0;\mf a_{j}) +\int_{0}^{t}\lambda(\xi,\uptau)D_{2}\Phi_j(x_{j-1}-x,t-\varsigma;\mf a_{j}) \xi_{j}(\varsigma)d\varsigma\nonumber\\
 		&+ \theta_{j}(x,t,\uptau),\label{RRR}
 \end{align}
with the respective forms $ \Phi_j $ defined by \eqref{R(y,t;l)} in Lemma \ref{lem2.3}.
This last equation \eqref{RRR} can be rewritten as 
 \begin{equation}
 	\begin{split}
 		\varphi_{j}(x,t,\uptau)=T_{j}\tilde{\zeta}_{j}(x_{j}-x,t;\mf b_{j})-T_{j}\tilde{\xi}_{j}(x_{j-1}-x,t;\mf a_{j})+ \theta_{j}(x,t,\uptau),
 	\end{split}
 \end{equation}
 in which $\tilde{\zeta}_{j}=\lambda(t,\uptau) \zeta_{j}(t)$, $\tilde{\xi}_{j}=\lambda(t,\uptau) \xi_{j}(t)$ and the linear operator $ T_{j}$ is defined by
 \begin{equation}\label{RAGa}
 	T_{j}\varphi(y,t;\mb L)= \Phi_j(y,0;\mb L)\varphi(t)-\int_{0}^{t}D_{2}\Phi_j(y,t-\varsigma;\mb L)\varphi(\varsigma)d\varsigma
 \end{equation}
for all $ j=1,\cdots ,n,\ \mb L \in \bb{R}^2 $.\\
 The matching conditions $ \varphi_{j}(x_{j},t,\uptau)=\Lambda_{j}\varphi_{j+1}(x_{j},t,\uptau) $, $j=1,...,n-1$, lead to
 \begin{align}
 		T_{j}\tilde{\zeta}_{j}(0,t;\mf b_{j})&-T_j\tilde{\xi}_{j}(x_{j-1}-x_{j},t;\mf a_{j})+ \theta{j}(x_{j},t,\uptau)\nonumber\\
 		=&\Lambda_{j}\bigg(T_{j+1}\tilde{\zeta}_{j+1}(x_{j+1}-x_{j},t;\mf b_{j+1})-T_{j+1}\tilde{\xi}_{j+1}(0,t;\mf a_{j+1})+ \theta_{j+1}(x_{j},t,\uptau)\bigg).\label{YYYY}
 \end{align}
 In the sprite of the matching conditions \eqref{g_j=h_j-1}, we have $ \tilde \zeta_{j+1}(t)=\tilde \xi_{j}(t)$ for all $1\le j\le n-1 $. Thus, for $j=1$, 
 \begin{equation}\label{A1}
 	\begin{split}
 		-T_{1}\tilde{\xi}_{1}(x_{0}-x_{1},t;\mf a_{1})&-\Lambda_{1}T_{2}\tilde{\xi}_{1}(x_{2}-x_{1},t;\mf b_{2})+\Lambda_{1}T_{2}\tilde{\xi}_{2}(0,t;\mf a_{2})\\
 		&=\Lambda_{1} \theta_{2}(x_{1},t,\uptau)- \theta_{1}(x_{1},t,\uptau)-T_{1}\tilde{\zeta}_{1}(0,t;\mf b_{1}).
 	\end{split}
 \end{equation}
For $2\le j\le n-2$
 \begin{equation}\label{A2}
 	\begin{split}
 		T_{j}\tilde{\xi}_{j-1}(0,t;\mf b_{j})&-T_{j}\tilde{\xi}_{j}(x_{j-1}-x_{j},t;\mf a_{j})-\Lambda_{j}T_{j+1}\tilde{\xi}_{j}(x_{j+1}-x_{j},t;\mf b_{j+1})\\
 		&+\Lambda_{j}T_{j+1}\tilde{\xi}_{j+1}(0,t;\mf a_{j+1})=\Lambda_{j} \theta_{j+1}(x_{j},t,\uptau)- \theta_{j}(x_{j},t,\uptau)
 	\end{split}
 \end{equation}
For $j=n-1$
 \begin{equation}\label{A3}
 	\begin{split}
 		T_{n-1}\tilde{\xi}_{n-2}(0,t;\mf b_{n-1})&-T_{n-1}\tilde{\xi}_{n-1}(x_{n-2}-x_{n-1},t;\mf a_{n-1})-\Lambda_{n-1}T_{n}\tilde{\xi}_{n-1}(x_{n}-x_{n-1},t;\mf b_{n})\\
 		&=\Lambda_{n-1} \theta_{n}(x_{n-1},t,\uptau)- \theta_{n-1}(x_{n-1},t,\uptau)-\Lambda_{n-1}T_{n}\tilde{\xi}_{n}(0,t;\mf a_{n}).
 	\end{split}
 \end{equation}
This system \eqref{A1},\eqref{A2} and \eqref{A3}, of $(n-1)$ integral equations of the unknowns $\tilde{\xi}_j ;\ 1\leq j\leq n-1$, can be adjusted as a matrix equation
 \begin{equation}\label{NN}
 \mathcal A(0)\textbf{h}(t)+(\mathcal A^{'}\ast \textbf{h})(t)= \textbf{b}(t),
 \end{equation}
 with $ \mathcal A(t) $ is a tridiagonal matrix of order $ n-1 $ whose entries:\\   
$$ 	-\Phi_j(x_{j-1}-x_{j},t;\mf a_{j})-\Lambda_{j}\Phi_j(x_{j+1}-x_{j},t;\mf b_{j+1}),\quad 1\le j\le n-1\quad ({\text{main diagonal}}),$$
  $$ \Lambda_{j}\Phi_{j+1}(0,t;\mf a_{j+1}), \quad 1\le j\le n-2, \quad (\text{{super diagonal}})$$ 
  $$ \Phi_j(0,t;\mf b_{j}),\quad  2\leq j\leq n-1 \quad ({\text{subdiagonal}}),$$
%
and the vectors $  \textbf{h}(t) $ and $ \textbf{b}(t) $ are defined as
 \begin{equation}
 	\textbf{h}(t)=
 	\begin{pmatrix}
 		\tilde{\xi}_{1}(t)\\
 		\vdots\\
 		\tilde{\xi}_{n-1}(t)
 	\end{pmatrix}
 \end{equation}
 and 
 \begin{equation}
 	\textbf{b}(t)=
 	\begin{pmatrix}
 		\Lambda_{1}\theta_{2}(x_{1},t,\uptau)- \theta_{1}(x_{1},t,\uptau)-T_{1}\tilde{\zeta}_{1}(0,t;\mf b_{1})\\
 		\Lambda_{2} \theta_{3}(x_{2},t,\uptau)-\theta_{2}(x_{2},t,\uptau)\\
 		\vdots\\
 		\Lambda_{n-2} \theta_{n-1}(x_{n-2},t,\uptau)- \theta_{n-2}(x_{n-2},t,\uptau)\\
 		\Lambda_{n-1} \theta_{n}(x_{n-1},t,\uptau)- \theta_{n-1}(x_{n-1},t,\uptau)-\Lambda_{n-1}T_{n}\tilde{\xi}_{n}(0,t;\mf a_{n})
 	\end{pmatrix}.
 \end{equation}
In fact, we can rewrite \eqref{NN} as 
 \begin{equation}\label{G}
 	\textbf{h}(t)=\mathcal C(t)+(\mathcal B\ast \textbf{h})(t),
 \end{equation} 
 with $ \mathcal C(t)=\mathcal A(0)^{-1} \textbf{b}(t),$ and $\mathcal B(t)=-\mathcal A(0)^{-1}\mathcal A^{'}(t)$. In view of the convolution formula \eqref{N-Convolution}, the natural transform of \eqref{G} reads
 \begin{equation}\label{S}
 	\bb N[\textbf{h}(t)](s,\uptau)=\bb N[\mathcal C(t)](s,\uptau)+\uptau\bb N[\mathcal B(t)](s,\uptau) \ \bb N[\textbf{h}(t)](s,\uptau),
 \end{equation} 
which is equivalent to, 
 \begin{equation}\label{D}
 	\begin{split}
 		\bb N[\textbf{h}(t)](s,\uptau)&=\bigg(I-\uptau\bb N[\mathcal B(t)](s,\uptau)\bigg)^{-1}\bb N[\mathcal C(t)](s,\uptau)\\
 		&=\bigg(I+\uptau\bb N[\mathcal B(t)](s,\uptau)+(\uptau\bb N[\mathcal B(t)](s,\uptau))^{2}+...\bigg)\bb N[\mathcal C(t)](s,\uptau),
 	\end{split}
 \end{equation} 
 where $ I $ is the $ (n-1)\times (n-1) $ identity matrix. Once again throughout the convolution sence \eqref{N-Convolution}, the natural transform inversion of \eqref{D} is
 \begin{equation}
 	\begin{split}
 		\textbf{h}(t)&=\mathcal C(t)+(\mathcal B\ast\mathcal C)(t)+(\mathcal B\ast\mathcal B\ast\mathcal C)(t)+...\\
 		&=\mathcal C(t)+\sum_{m=1}^{\infty}(\mathcal B_{m}\ast\mathcal C)(t),
 	\end{split}
 \end{equation}
where, $ \mathcal B_{m} $ is the $ m- $times self-convolution of $ \mathcal B $. Finally, the solution of the nonhomogeneous multilayer diffusion sytems \eqref{BPV in 1 layer}-\eqref{BPV in n layer} and hence that of the main problem \eqref{gPDE}-\eqref{Inner Bc2} is concluded as
 \begin{equation}
 	\begin{split}
 		\varphi_{j}(x,t,\uptau)=&\lambda(t,\uptau)\zeta_{j}(t)\Phi_0^{(j)}(x_{j}-x;\mf b_{j})-\lambda(t,\uptau)\xi_{j}(t)\Phi_0^{(j)}(x_{j-1}-x;\mf a_{j})\\
 		&+\sum_{k=1}^{\infty}\frac{\lambda(t,\uptau)\zeta_{j}(t) -\int_{0}^{t}\lambda(\xi,\uptau)s_{k}^{(j)}e^{s_{k}^{(j)}(t-\varsigma)/\uptau}\zeta_{j}(\varsigma)d\varsigma} {s_{k}^{(j)}\Delta_j^{'}(s_{k}^{(j)})}\langle  \mf b_{j} ,\mf C_{j}(x_{j}-x,s_{k}^{(j)},\uptau)\rangle\\
 		&-\sum_{k=1}^{\infty}\frac{\lambda(t,\uptau)\xi_{j}(t) -\int_{0}^{t}\lambda(\xi,\uptau)s_{k}^{(j)}e^{s_{k}^{(j)}(t-\varsigma)/\uptau}\xi_{j}(\varsigma)d\varsigma} {s_{k}^{(j)}\Delta_j^{'}(s_{k}^{(j)})}\langle  \mf a_{j} ,\mf C_{j}(x_{j-1}-x,s_{k}^{(j)},\uptau)\rangle\\
 		&+ \theta_{j}(x,t,\uptau),
 	\end{split}
 \end{equation}
with the respective forms $ \Phi_0^{(j)} $ and $\theta_j$ defined as in \eqref{R_0} and \eqref{vjnlayer}, respectively for all $ j=1,...,n $. 
\section{Conclusion}
Throughout the current contribution, a one-dimensional n-layer nonhomogeneous diffusion problem with time-varying data and general interface conditions have been concluded by means of a generalized integral transform. Although, most of the previous works have been focused on solving the problems of the homogeneous diffusion equation, the nonhomogeneous diffusion equation problem arises in many physical application. We have obtained the exact solutions for one- and multi-layer nonhomogeneous diffusion problems. The former case has been solved by a new generalized integral transform, the later one (n-layer problem) has been recast in a sequence of one layer problems. The obtained results generalize and extend those in \cite{carr2016semi},\cite{carr2018semi}, \cite{park2009one}, \cite{rodrigo2016solution} and \cite{zimmerman2016analytical}. 

Our results motivate to deal with other types of diffusion problems. For example, reaction diffusion problems, Advection-reaction diffusion problems and non-autonomous reaction diffusion problems, etc.

On the other hand, more general partial differential equations (PDEs) and systems can be considered. for example, system of coupled PDEs, nonlinear diffusion PDEs and  non-autonomous reaction diffusion PDEs. Those kinds of PDEs appear widely as epidemiological models to study and analyze the spread of diseases and pandemics \cite{du2018partial,hickson2009critical,raimundez2021covid,viguerie2021simulating}.
\section*{Acknowledgment}
This project was supported by the Academy of  Scientific Research and Tecchnology (ASRT), Egypt (Grant N0. 6407)
%
\bibliography{References_Akel1.bib}
\end{document}